\newsavebox{\@brx}
\newcommand{\llangle}[1][]{\savebox{\@brx}{\(\m@th{#1\langle}\)}%
  \mathopen{\copy\@brx\kern-0.5\wd\@brx\usebox{\@brx}}}
\newcommand{\rrangle}[1][]{\savebox{\@brx}{\(\m@th{#1\rangle}\)}%
  \mathclose{\copy\@brx\kern-0.5\wd\@brx\usebox{\@brx}}}
\newtheorem{theorem}[equation]{Theorem}
\newtheorem{corollary}[equation]{Corollary}
\newtheorem{lemma}[equation]{Lemma}
\newtheorem{proposition}[equation]{Proposition}
\newtheorem{conjecture}[equation]{Conjecture}
\newtheorem{question}[equation]{Question}
\theoremstyle{definition}
\newtheorem{example}[equation]{Example}
\newtheorem{remark}[equation]{Remark}
\numberwithin{equation}{section}
\newcommand{\GL}{\mathrm{GL}}
\begin{document}

\title[Congruence subgroups]{Congruence subgroups of small Artin and Coxeter groups}

\author{Pravin Kumar}
\email{pravin444enaj@gmail.com}
\address{Beijing Institute of Mathematical Sciences and Applications, Beijing 101408, China.}

\subjclass[2020]{Primary 20F36; Secondary 20F55, 19B37} 
\keywords{Artin group, braid group, congruence subgroup problem, Coxeter group, integral Burau representation,  root system, Tits representation.}

\begin{abstract}

Small Coxeter groups are exactly those for which the Tits representation takes integral values, which makes the study of their congruence subgroups significant. In \cite{MR0938643}, Squier introduced a matrix representation of an Artin group defined over the ring $\mathbb Z[s^{\pm}, t^{\pm}]$ of Laurent polynomials in two variables. This representation simultaneously generalises the Tits representation of the associated Coxeter groups and the reduced Burau representation of braid groups. We define small Artin groups as those for which this representation becomes integral when evaluated at $s=1$ and $t=-1$. Consequently, the study of congruence subgroups of small Artin groups extends the classical notion of congruence subgroups of braid groups, which arise from the integral reduced Burau representation. In this paper, we examine Coxeter and Artin groups that possess the congruence subgroup property and identify several of their principal congruence subgroups at small levels.

\end{abstract}

\maketitle
\section{Introduction}
Congruence subgroups of a group are defined via a choice of an integral representation into $\mathrm{GL}(n, \mathbb{Z})$. The {\it level $m$ principal congruence subgroup} $G[m]$ of $G$ is then defined as the kernel of the composition map $G \rightarrow \mathrm{GL}(n, \mathbb{Z}) \rightarrow \mathrm{GL}(n, \mathbb{Z} / m)$. Congruence subgroups play an important role in the theory of arithmetic groups. We say that a group $G$ has the {\it congruence subgroup property} if every finite index subgroup of $G$ contains some principal congruence subgroup.
\par

 In the case of braid groups, the integral (unreduced) Burau representation has been used to define principal congruence subgroups. These subgroups have been studied in detail, and we refer to \cite{quotientbraid, MR4811535, MR4157115, MR3757477, MR3786423} for recent results and a survey of congruence subgroups of braid groups.  Among notable results, Arnold proved that the level two principal congruence subgroups of the braid group $B_n$ on $n$ strands is the pure braid group $PB_n$ \cite{MR0244266}.  Brendle and Margalit proved that the level four principal congruence subgroups of $B_n$ equals the kernel of the mod two abelianisation map $PB_n \to H_1(PB_n,\mathbb Z_2)$ \cite[Main Theorem]{MR3757477}. Further, Kordek and Margalit determined the first rational homology of the level four principal congruence subgroup of $B_n$ \cite{MR4462681}.
 \par
 
The congruence subgroups of Coxeter groups whose Tits representation is integral have been investigated recently in \cite{mypaper1}. It has been proved that infinite Coxeter groups with integral Tits representation which are not virtually abelian do not admit the congruence subgroup property \cite[Theorem 3.8]{mypaper1}. Further, it has been shown that the level four principal congruence subgroup of a right-angled Coxeter group identifies with its commutator subgroup \cite[Proposition 3.5]{mypaper1}. 
\par
  
Artin groups form a family of infinite discrete groups defined by presentations of a specific form, closely related to those of Coxeter groups. Despite their simple presentation, they remain not so well understood in many respects. The class of Artin groups is broad, encompassing braid groups, free groups, free abelian groups, as well as many more exotic examples.
\par

In \cite{MR0938643}, Squier introduced a matrix representation of an Artin group over the ring $\mathbb Z[s^{\pm}, t^{\pm}]$ of Laurent polynomials in two-variables. 
The representation is referred to as the generalised Burau representation, and generalises the Burau representation of the braid group (obtained by setting $s=1$),
 as well as the Tits representation of the corresponding Coxeter group (obtained by setting  $s =t= 1$). We say that an Artin group is a {\it small Artin group} if its generalised Burau representation is integral when we substitute $s=-t=1$. This makes the study of congruence subgroups of such groups relevant, which we explore in this paper.
\par

In this paper, we investigate the congruence subgroup property for Coxeter and Artin groups. The paper is organised as follows.   In Section \ref{section 2}, the basic preliminaries on Coxeter groups, Artin groups, and generalised Burau representation of Artin groups are presented.  In Section \ref{section 3}, we investigate the congruence subgroup problem for abstract groups and prove that a group with a finite index subgroup that surjects onto a free non-abelian group does not have the congruence subgroup property with respect to any integral representation (Proposition~\ref{thm:generalcsp}). In Section \ref{section 4}, we prove that a small Coxeter group which is virtually abelian admit the congruence subgroup property (Theorem~\ref{thm:cspaffine}).  In Section \ref{section 5}, we investigate the congruence subgroup property for Artin groups with respect to integral generalised Burau representations, and prove that Artin groups associated to Coxeter graphs whose connected components are not affine do not admit the congruence subgroup property (Theorem~\ref{mainthm:artin}). In Sections~\ref {section 6}~and~\ref{section 7}, we identify the principal congruence subgroups of Artin groups of small level. We prove that if $A$ is an Artin group associated with a spherical Coxeter graph, then $A/A[2] \cong W/Z(W)$, where $W$ is the corresponding Coxeter group (Theorem \ref{thm:level2}). We prove that if $A$ is a right-angled Artin group, then $A/A[4] \cong W/W^{'}$, where $W$ is the corresponding  right-angled Coxeter group (Theorem \ref{thm:level 4}).
\section{Preliminaries}\label{section 2}
\medskip

\subsection{Coxeter groups}
A group $W$  is called a {\it Coxeter group} if it admits a  presentation of the form $\big \langle S \mid R \big \rangle$, where $S= \{ w_i \mid i \in \Pi\}$ is the set of generators and 
\begin{equation}\label{Coxeter presentation}
R  = \big \{ (w_iw_j)^{m_{i,j}}~ \mid m_{i,j}\in \mathbb{N} \cup \{\infty\},~m_{i,j}=m_{j,i}, ~m_{i,j} = 1 ~\textrm{if and only if}~ i =j \big\}
\end{equation}
is the set of defining relations. The pair $(W, S)$ is called a  {\it Coxeter system}. If $m_{i,j} = \infty$, there is no relation between $w_i$ and $w_j$. We refer to $m_{i,j}$'s as {\it exponents} of the Coxeter system $(W, S)$. The cardinality of the set $S$ is called the {\it rank} of the Coxeter system, and we shall consider only finite rank Coxeter systems. Since Coxeter groups are completely determined by their exponents, we use matrices or graphs to encode this information.

 A {\it Coxeter matrix} on a (finite) set $S$ is a symmetric matrix $(m_{i,j})_{i,j\in S}$, where $m_{i,j} \in \mathbb{N} \cup\{\infty\}$ and satisfy $m_{i,i}=1$ for all $i\in S$ and $m_{i,j}=m_{j,i}\ge2$ for all $i,j\in S$ with $i\neq j$. Each such matrix can be represented by a graph, called {\it Coxeter graph} $\Gamma$, which is a labelled simple graph with $S$ as its set of vertices and two vertices $i,j \in S$ are joined by an edge if $m_{i,j}\ge3$, and such an edge is labelled with $m_{i,j}$ if $m_{i,j}\ge 4$. 

\par 

When we refer to an abstract group $W$ as a Coxeter group, we mean that $W$ has an associated Coxeter matrix or Coxeter graph and possesses a presentation of the form \eqref{Coxeter presentation}.  If the information is encoded in terms of the Coxeter matrix, we denote the Coxeter system by the pair $ (W, S) $, where $ S $ is the set of all Coxeter generators. If the information is encoded in terms of the Coxeter graph $ \Gamma $, we denote the Coxeter system as $W[\Gamma] $. Both notations are used interchangeably, depending on the context. 
\par
The Coxeter system $(W , S)$ is said to be {\it reducible} if $W = W_1 \times W_2$ , where $W_1 = \langle S_1 \rangle$ and $W_2 = \langle S_2\rangle $ for some subsets $S_1$ and $S_2$ of $S$. Otherwise, the Coxeter system is said to be {\it irreducible}. Equivalently, the Coxeter system is said to be irreducible if the Coxeter graph is connected.
\par

A homomorphism  $\psi:(W,S)\to (W',S')$  between Coxeter groups is called a {\it graph homomorphism} if $\psi(s)\in S'$ or $\psi(s)=1$ for all $s\in S$, and every $s'\in S'$ is of the form $\psi(s)$ for some $s\in S$. In particular, $\psi$ is surjective.
\par
Let $W$ be a Coxeter group given by a Coxeter presentation $W= \langle S \mid R \rangle $, where $S= \{ w_i \mid i \in \Pi\}$. Let $V$ be the real vector space spanned by the set $\{e_i \mid i \in \Pi \}$. Define a symmetric bilinear form $B$ on $V$ by
$$
B\left(e_{i}, e_{j}\right)= \begin{cases}-\cos \left(\frac{\pi}{m_{i, j}}\right) & \text { if } m_{i, j} \neq \infty, \\ -1 & \text { if } m_{i, j}=\infty. \end{cases}
$$
Then, for each $i \in \Pi$, the linear map $\overline{\rho}_{i}: V \rightarrow V$ given by 
\begin{equation}\label{Tits representation formula}
\overline{\rho}_{i}(v)=v-2 B\left(e_{i}, v\right) e_{i},
\end{equation}
 defines an automorphism of $V$. The following result is a folklore \cite[Chapter V, Section 4]{Bourbaki}.

\begin{theorem}
The map $\rho: W \rightarrow \GL(V)$ defined through $\rho\left(w_{i}\right)=\overline{\rho}_{i}$ is a faithful representation of $W$.
\end{theorem}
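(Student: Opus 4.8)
The plan is to prove the statement in two stages: first that $\rho$ is a well-defined homomorphism, and then that it is injective. For well-definedness I would check that the $\overline\rho_i$ satisfy the defining relations \eqref{Coxeter presentation}. A direct computation from \eqref{Tits representation formula}, using $B(e_i,e_i)=-\cos(\pi/m_{i,i})=-\cos\pi=1$, gives $\overline\rho_i^2=\mathrm{id}$ and $B(\overline\rho_i v,\overline\rho_i v')=B(v,v')$, so each $\overline\rho_i$ is a $B$-orthogonal reflection. For the Coxeter relation $(\overline\rho_i\overline\rho_j)^{m_{i,j}}=\mathrm{id}$ when $m_{i,j}<\infty$, I would restrict to the plane $P=\mathbb R e_i+\mathbb R e_j$ and its $B$-orthogonal complement $P^\perp$. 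When $m_{i,j}<\infty$ the form $B|_P$ is positive definite, so $\overline\rho_i,\overline\rho_j$ restrict to ordinary Euclidean reflections in $P$ through lines meeting at angle $\pi/m_{i,j}$; hence $\overline\rho_i\overline\rho_j|_P$ is a rotation of order exactly $m_{i,j}$, while $\overline\rho_i\overline\rho_j$ fixes $P^\perp$ pointwise. This gives the required order and shows $\rho$ is well defined. (When $m_{i,j}=\infty$ there is no relation to verify.)

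For faithfulness I would introduce the root system $\Phi=\{\,w(e_i):w\in W,\ i\in\Pi\,\}$ and the word-length function $\ell$ on $W$ with respect to the generators $w_i$. The crucial lemma is the sign rule: for $w\in W$ and $i\in\Pi$, one has $w(e_i)\in\Phi^{+}$ (a nonnegative $\mathbb R$-combination of the $e_k$) whenever $\ell(ww_i)>\ell(w)$, and $w(e_i)\in\Phi^{-}$ whenever $\ell(ww_i)<\ell(w)$; since $\ell(ww_i)\neq\ell(w)$ always, this also shows every root is positive or negative. It suffices to prove the first implication by induction on $\ell(w)$, the second following from $w(e_i)=-(ww_i)(e_i)$ with $ww_i$ shorter. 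The base case $w=1$ is immediate. For the inductive step, assuming $\ell(ww_i)>\ell(w)>0$, I would pick a generator $w_j\neq w_i$ with $\ell(ww_j)<\ell(w)$ and pass to the rank-two parabolic subgroup $W'=\langle w_i,w_j\rangle$, which is dihedral of order $2m_{i,j}$. Writing $w=w_1 v$ with $w_1$ the minimal-length representative of the coset $wW'$ and $v\in W'$, one has $\ell(w)=\ell(w_1)+\ell(v)$ and $\ell(w_1)<\ell(w)$; minimality forces $\ell(w_1w_i)>\ell(w_1)$ and $\ell(w_1w_j)>\ell(w_1)$, so the inductive hypothesis makes $w_1(e_i)$ and $w_1(e_j)$ positive. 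Finally, the explicit dihedral action in the plane $P$ analysed above shows $v(e_i)$ is a nonnegative combination of $e_i$ and $e_j$, whence $w(e_i)=w_1\!\bigl(v(e_i)\bigr)$ is a nonnegative combination of the positive roots $w_1(e_i),w_1(e_j)$, hence positive.

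Granting the sign rule, faithfulness is immediate: if $w\neq 1$, choose a reduced expression $w=w_{i_1}\cdots w_{i_k}$ with $k\geq 1$ and put $i=i_k$, so that $\ell(ww_i)=k-1<k=\ell(w)$; then $w(e_i)\in\Phi^{-}$, and in particular $w(e_i)\neq e_i$, so $\rho(w)\neq\mathrm{id}$. Hence $\ker\rho=\{1\}$ and $\rho$ is faithful. I expect the main obstacle to be the inductive proof of the sign rule: the bookkeeping in the rank-two coset reduction and the verification that roots never have coordinates of mixed sign are exactly the points where the geometry of $B$ must be used carefully — its positive-definiteness on $P$ for finite $m_{i,j}$, and the degenerate positive-semidefinite (infinite-dihedral) picture when $m_{i,j}=\infty$.
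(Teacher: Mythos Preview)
The paper does not actually prove this theorem: it records it as folklore and cites \cite[Chapter V, Section 4]{Bourbaki} without supplying an argument. Your proposal is precisely the classical Bourbaki--Tits proof (well-definedness via the rank-two Euclidean picture on $P=\mathbb{R}e_i+\mathbb{R}e_j$, faithfulness via the positive/negative root dichotomy proved by induction on length with a rank-two parabolic reduction), and the outline is correct. One small point worth making explicit in your write-up: the decomposition $V=P\oplus P^{\perp}$ used in the well-definedness step is legitimate because $B|_P$ is positive definite (hence non-degenerate) when $m_{i,j}<\infty$, even though $B$ need not be non-degenerate on all of $V$; and in the inductive step, the fact that $\ell_{W'}(vw_i)>\ell_{W'}(v)$ (needed for the dihedral positivity computation) follows from the additivity $\ell(w_1u)=\ell(w_1)+\ell(u)$ for all $u\in W'$ applied to $u=v$ and $u=vw_i$.
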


The  representation $\rho: W \rightarrow \GL(V)$ is called the {\it Tits representation} of $W$. It is easy to see that Tits representation of Coxeter group is integral if and only if all expontents $m_{i,j}\in\{1,2,3,\infty\}$. We say that a group is a {\it small Coxeter group} if it admits a Coxeter system such that each exponent $m_{i, j}$ is either $\infty$ or less than or equals to 3. For instance, symmetric groups, right-angled Coxeter groups and universal Coxeter groups are small Coxeter groups.
\par

The classification of finite irreducible Coxeter groups consists of four families of groups: $ A_n $ for $ n \geq 1 $, $ B_n $ for $ n \geq 2 $, $ D_n $ for $ n \geq 4 $, and $ I_2(p) $ for $ p \geq 5 $. Additionally, there are six exceptional groups: $ E_6 $, $ E_7 $, $ E_8 $, $ F_4 $, $ H_3 $, and $ H_4 $ (see the Figure~\ref{fincoxgraph}). We note that the labelling of vertices in Figure~\ref{fincoxgraph} is same as given in \cite{Bourbaki}.

  Every finite Coxeter group can be expressed as a direct product of a finite number of these irreducible groups.
\begin{figure}
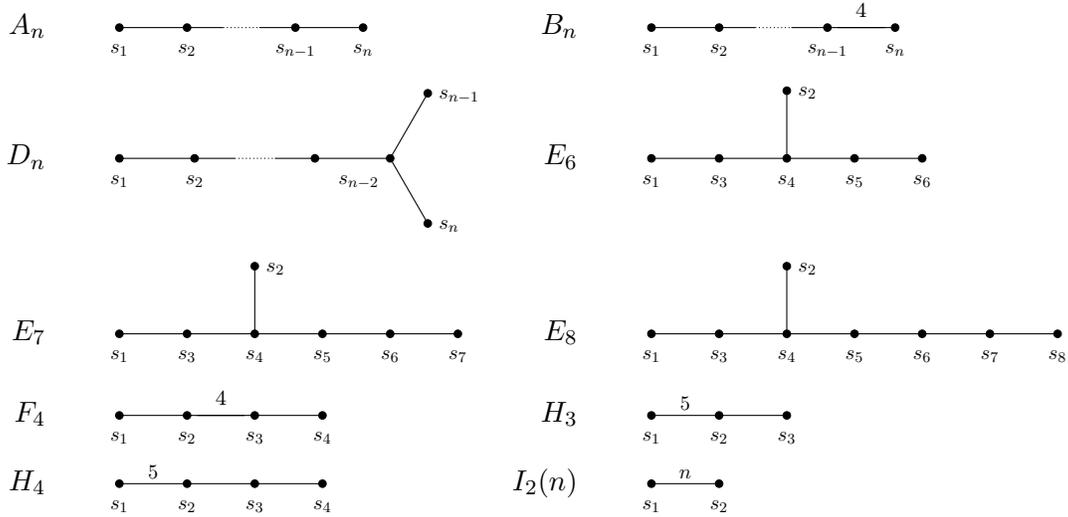

\begin{small}
$$
\begin{aligned}
A_n\quad\quad &\begin{dynkinDiagram}[labels={s_1,s_2,s_{n-1},s_n}, edge length=0.9cm]{A}{}
\end{dynkinDiagram}& 
B_n\quad\quad &\begin{dynkinDiagram}[labels={s_1,s_2,s_{n-1},s_n}, edge length=0.9cm]{A}{}
    \draw (root 3) -- node[above=0.3pt] {\tiny 4} (root 4);
\end{dynkinDiagram}&\\
D_n\quad\quad &\begin{dynkinDiagram}[labels={s_1, s_{2},  ,{\small ~~~s_{n-2}},s_{n-1}, s_{n}}, edge length=1.0cm]{D}{}
\end{dynkinDiagram}&
E_6 \quad\quad&\begin{dynkinDiagram}[labels={s_1,s_2,s_3,s_4,s_5,s_6}, edge length=0.9cm]{E}{6}
\end{dynkinDiagram}&\\
E_7 \quad\quad&\begin{dynkinDiagram}[labels={s_1,s_2,s_3,s_4,s_5,s_6,s_7}, edge length=0.9cm]{E}{7}
\end{dynkinDiagram}&
E_8 \quad\quad&\begin{dynkinDiagram}[labels={s_1,s_2,s_3,s_4,s_5,s_6,s_7,s_8}, edge length=0.9cm]{E}{8}
\end{dynkinDiagram}&\\
F_4\quad\quad& \begin{dynkinDiagram}[labels={s_1,s_2,s_3,s_4}, edge length=0.9cm]{A}{4}
    \draw (root 2) -- node[above=0.3pt] {\tiny 4} (root 3);
\end{dynkinDiagram}
&
H_3\quad\quad &\begin{dynkinDiagram}[labels={s_1,s_2,s_3},edge length=0.9cm]{H}{3}
\end{dynkinDiagram}
&\\
H_4\quad\quad & \begin{dynkinDiagram}[labels={s_1,s_2,s_3,s_4},edge length=0.9cm]{H}{4}
\end{dynkinDiagram}
&
I_2(n)\quad\quad &\dynkin [Coxeter,gonality=n, edge length=0.9cm, labels={s_1,s_2}]I{}&
\end{aligned}
$$
\end{small}
\caption{Coxeter graphs of irreducible finite Coxeter groups.}
\label{fincoxgraph}
\end{figure}

\par

\begin{remark}\label{rmk:finrig}
Let $(W,S)$ be an irreducible Coxeter system. The group $W$ can be expressed as a direct sum of two non-trivial Coxeter groups if and only if $W$ is not isomorphic to $W[\Gamma]$, where $\Gamma = I_2(4k+2)$ or $B_{2k+1}$ ($k \geq 1$) \cite[Theorem 3.3]{MR2240393}. For $W[I_2({4k+2})]$ and $W[B_{2k+1}]$, the following direct product decompositions holds.
 $$W[I_2(4k+2)] =W[I_2(2k+1)]\times W[A_1]=W[I_2(2k+1)] \times \mathbb Z_2$$
 and 
 $$W[B_{2k+1}]=W[D_{2k+1}]\times W[A_1].$$
 \end{remark}
 
 \begin{remark}
 From the classification of irreducible finite Coxeter groups, the Coxeter group of type $A_n$, $D_n$, and the exceptional groups $E_6$, $E_7$, and $E_8$ are small Coxeter groups. Additionally, the Coxeter group of type $I_6$ and $B_{2k+1}$ also have small Coxeter systems (cf. Remark \ref{rmk:finrig}). However, the remaining irreducible finite Coxeter groups: $B_{2n}=C_{2n}$ with $n \geq 1$, $I_2(n)$ with $n \geq 7$ or $n=5$, and the groups $F_4$, $H_3$, and $H_4$, do not have any small Coxeter systems due to their rigid properties as mentioned in Remark \ref{rmk:finrig}.
\end{remark}

\subsection{Artin groups}
A group $A$  is called an {\it Artin group} if there exist a  Coxeter system $(W, S)$ with Coxeter matrix $(m_{i,j})$ such that $A$ admits a  presentation of the form  
\begin{equation}\label{Artin presentation}
A = \big\langle S' \mid \underbrace{a_i a_j a_i \cdots}_{m_{i,j} \text{ terms}} = \underbrace{a_j a_i a_j \cdots}_{m_{i,j} \text{ terms}} \text{ for all } i \neq j \text{ with } m_{i,j} < \infty \big\rangle,
\end{equation}
where $S'= \{ a_i \mid i \in \Pi\}$ is a set in one-to-one correspondence with the set  $S$ of Coxeter generators. The pair $(A, S')$ is called an Artin system corresponding to the Coxeter system $(W,S)$. Given a Coxeter sytem $(W,S)$, there is a natural projection homomorphism $\pi:A\to W$ defined by sending each Artin generator $a_i\in S'$ to the corresponding Coxeter generator $w_i\in S$. The kernel of this homomorphism is called the {\it pure artin group} corresponding to the Coxeter system $(W,S)$. The homomorphism $\pi$ has a natural set-theoretic section $\iota: W \to A$ defined as follows. Let $w = s_1\ldots s_r \in W$ be any reduced expression of $w$ and we set $\iota(w)=a_1\ldots a_r \in A$. Note that $\iota$ is not a homomorphism. 

 If the information of exponents are encoded in the graph $\Gamma$, then we denote for Artin group corresponding to Coxeter graph $\Gamma$ as $A[\Gamma]$.  We say that $\Gamma$, $A[\Gamma]$ or $W[\Gamma]$ is {\it spherical} if $W[\Gamma]$ is a finite group, and it is {\it right-angled} if $m_{i,j} \in \{2,\infty\} $ for each $i\neq j$. Furthermore, it is called {\it crystallographic} if $m_{i,j}\in\{2,3,4,6,\infty\}$ for each $i\neq j$.

The {\it fundamental element} of spherical Artin group $A[\Gamma]$ is defined to be $\Delta= \iota(w_0)$, where $w_0$ denotes the element of $W[\Gamma]$ of maximal length. 
If $\Gamma$ to be connected and spherical, then the center $Z(A[\Gamma])$ of $A[\Gamma]$ is an infinite cyclic subgroup generated either by $\delta= \Delta^2$ if $\Gamma$ is $A_n$, $n\ge 2$, $D_{2n+1}$, $n\ge 2$,  $E_6$, and $I_2(2p+1)$, $p\ge 2$ or $\delta=  \Delta$ otherwise.

\subsection{Generalised Burau representation of Artin groups}
Let $A[\Gamma]$ be Artin group associated to Coxeter graph $\Gamma$ with Coxeter matrix $M=(m_{i,j})$. Let $\Lambda$ denote the Laurent-polynomial ring $\mathbb{R}\left[s, s^{-1}, t, t^{-1}\right]$, where $s$ and $t$ are indeterminates over $\mathbb{R}$. Define $K=K(M)$ to be the $n \times n$ matrix $\left[a_{i j}\right]$ over $\Lambda$, where
$$
a_{i j}= \begin{cases}-2 s \cos (\pi / m_{i, j}) & i<j, \\ 1+s t & i=j, \\ -2 t \cos (\pi / m_{i, j}) & i>j .\end{cases}
$$
Since $\operatorname{det}(K)\neq 0$, the form $\langle - , - \rangle$ is non-degenerate.  We introduce an analogue of complex conjugation in the Laurent-polynomial ring $\Lambda$ : if $x \in \mathbb{R}$ then $\bar{x}=x$, $\bar{s}=s^{-1}$ and $\bar{t}=t^{-1}$, extended to $\Lambda$ additively and multiplicatively. Note that if we substituted $s$ and $t$ with complex numbers of norm 1, then we recover ordinary complex conjugation. We extend the definition of conjugation to matrices entrywise and, if $A$ is a matrix over $\Lambda$, we define $A^*=\bar{A}^{\prime}$. For example, note that $K^*=s^{-1} t^{-1} K$.
\par
Let $V$ denote the free $\Lambda$-module with basis $\left\{e_1, \ldots, e_n\right\}$, and, as above, identify each $v \in V$ with its column vector of coordinates. For $u, v \in V$, we define $\langle u, v\rangle=u^* K v$. 
\begin{example}
For braid groups, we have
$$
\langle e_i, e_j \rangle = \begin{cases}
1+st & \text{ if } i=j, \\
-s & \text {if } j=i+1, \\
-t & \text {if } j=i-1, \\
0 & \text {if } |i-j| \geq 2.
\end{cases}
$$
\end{example}

For each $a_i \in \Pi$, the $\Lambda$-module homomorphism $\overline{\sigma}_{i}: V \rightarrow V$ given by 
\begin{equation}\label{generalised burau representation formula}
\overline{\sigma}_{i}(v)=v-\left\langle e_i, v\right\rangle e_i = v - (v^tKe_i)e_i,
\end{equation}
defines an automorphism of $V$. 
\begin{theorem}\cite[Theorem 1]{MR0938643}
The map $\sigma: A[\Gamma] \to \GL(V)$ defined through $\sigma\left(a_i\right)=\overline{\sigma}_i$ is a representation of $A[\Gamma]$.
\end{theorem}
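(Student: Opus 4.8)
The plan is to use the presentation \eqref{Artin presentation}: since $A[\Gamma]$ is generated by the $a_i$ subject only to the braid relations, the universal property of a group presentation says that $a_i\mapsto\overline{\sigma}_i$ extends to a homomorphism $\sigma\colon A[\Gamma]\to\GL(V)$ exactly when (i) each $\overline{\sigma}_i$ is invertible, and (ii) the family $\{\overline{\sigma}_i\}$ satisfies the braid relation $\underbrace{\overline{\sigma}_i\overline{\sigma}_j\overline{\sigma}_i\cdots}_{m_{i,j}\text{ terms}}=\underbrace{\overline{\sigma}_j\overline{\sigma}_i\overline{\sigma}_j\cdots}_{m_{i,j}\text{ terms}}$ for every $i\neq j$ with $m_{i,j}<\infty$. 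For (i), write $\overline{\sigma}_i=I-e_i(Ke_i)^t$ as a rank-one perturbation of the identity. The matrix determinant lemma then gives $\det\overline{\sigma}_i=1-(Ke_i)^te_i=1-a_{i,i}=-st$, which is a unit in $\Lambda$; hence $\overline{\sigma}_i\in\GL(V)$.

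For (ii) the first step is to cut each braid relation down to a two-dimensional computation. Let $\phi_k(v)=\langle e_k,v\rangle$, so that $\overline{\sigma}_k(v)=v-\phi_k(v)e_k$; in particular $\overline{\sigma}_k$ fixes $\ker\phi_k$ pointwise. Since $\Lambda$ is an integral domain, it suffices to prove the braid relation as an identity of matrices over the fraction field $F=\mathbb{R}(s,t)$, so I would work in $V_F=V\otimes_\Lambda F$. Fix $i\neq j$ with $m=m_{i,j}<\infty$, and set $U=Fe_i\oplus Fe_j$ and $Z=\ker\phi_i\cap\ker\phi_j$. Both $\overline{\sigma}_i$ and $\overline{\sigma}_j$ fix $Z$ pointwise, and both preserve $U$ because $\overline{\sigma}_i(e_i)=-st\,e_i$ and $\overline{\sigma}_i(e_j)=e_j-a_{i,j}e_i$ (and symmetrically for $\overline{\sigma}_j$). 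Moreover the $2\times2$ matrix $\bigl(\begin{smallmatrix}\phi_i(e_i)&\phi_i(e_j)\\ \phi_j(e_i)&\phi_j(e_j)\end{smallmatrix}\bigr)=\bigl(\begin{smallmatrix}1+st&a_{i,j}\\ a_{j,i}&1+st\end{smallmatrix}\bigr)$ has determinant $(1+st)^2-a_{i,j}a_{j,i}$, a nonzero polynomial and hence a unit in $F$, so $U\cap Z=0$ and a dimension count yields $V_F=U\oplus Z$. Thus $\overline{\sigma}_i$ and $\overline{\sigma}_j$ are block diagonal for this decomposition, acting as the identity on $Z$; any word in them therefore agrees on $Z$, and the braid relation holds on $V_F$ if and only if it holds on the two-dimensional space $U$.

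It remains to verify the rank-two braid identity. In the basis $(e_i,e_j)$ the restricted operators are
$$
A=\begin{pmatrix}-st & -a_{i,j}\\ 0 & 1\end{pmatrix},\qquad
B=\begin{pmatrix}1 & 0\\ -a_{j,i} & -st\end{pmatrix},
$$
both of determinant $-st$. A direct computation, using $a_{i,j}a_{j,i}=4st\cos^2(\pi/m)$ together with $1-2\cos^2(\pi/m)=-\cos(2\pi/m)$, gives $\det(AB)=(st)^2$ and $\operatorname{tr}(AB)=2st\cos(2\pi/m)$. By Cayley--Hamilton $AB$ satisfies $X^2-2st\cos(2\pi/m)\,X+(st)^2=0$, so $(st)^{-1}AB$ is, up to conjugacy, a rotation by $2\pi/m$, and its powers obey the Chebyshev recursion $P_n=2\cos(2\pi/m)\,P_{n-1}-P_{n-2}$. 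Splitting into the cases $m=2k$ and $m=2k+1$, the braid relation reduces to $(AB)^k=(BA)^k$ and to $(AB)^kA=(BA)^kB$ respectively; writing $BA=B(AB)B^{-1}$, both become statements about $(AB)^k$ that I would settle by induction using the recursion above (the degenerate case $m=2$ is immediate, since then $a_{i,j}=a_{j,i}=0$ and $A,B$ commute).

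The main obstacle is exactly this last rank-two verification: everything before it is formal linear algebra, but closing up the word of length $m$ requires matching the half-angle data $\cos(\pi/m)$ entering $A,B$ with the double-angle spectrum $\cos(2\pi/m)$ of $AB$, and handling the two parities of $m$ so that the two sides coincide termwise. Once the identity on $U$ is confirmed for every pair $i\neq j$, assembling the block-diagonal actions shows all defining relations of $A[\Gamma]$ hold, so $\sigma$ is a well-defined representation.
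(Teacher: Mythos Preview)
The paper does not actually prove this statement: it is quoted as \cite[Theorem~1]{MR0938643} and used as a black box, so there is no in-paper argument to compare against beyond Squier's original.

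Your strategy is the standard one and is correct. The invertibility check and the splitting $V_F=U\oplus Z$ are fine (one small slip: your rank-one formula $\overline{\sigma}_i=I-e_i(Ke_i)^t$ yields $\overline{\sigma}_i(e_j)=e_j-a_{j,i}e_i$, not $e_j-a_{i,j}e_i$; this mirrors the ambiguity already present in the paper's equation~\eqref{generalised burau representation formula}, and is harmless since only $a_{i,i}$ and the product $a_{i,j}a_{j,i}$ enter the argument). The reduction to the $2\times2$ problem and your computation of $\operatorname{tr}(AB)=2st\cos(2\pi/m)$ and $\det(AB)=(st)^2$ are correct.

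The final step you flag as the obstacle can be closed along the lines you sketch. For $m=2k$ even, the eigenvalues of $AB$ are $st\,e^{\pm 2\pi i/m}$, so $(AB)^k=-(st)^kI=(BA)^k$. For $m=2k+1$ odd, set $X=(AB)^kA$ and $Y=B(AB)^k$; then $\det X=-(st)^m$ and $XY=(AB)^m=(st)^mI$, so $X=Y$ is equivalent to $X^2=(st)^mI$, which by Cayley--Hamilton reduces to $\operatorname{tr}(X)=0$. Expanding $C^k$ via the Chebyshev recursion with $U_n(\cos\theta)=\sin((n{+}1)\theta)/\sin\theta$ and $\theta=2\pi/m$, one finds $\operatorname{tr}((AB)^kA)$ is a nonzero multiple of $\sin((k{+}1)\theta)+\sin(k\theta)$, which vanishes because $(k{+}1)\theta+k\theta=2\pi$. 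So your outline is complete once this trace computation is written out.
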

The representation $\sigma$ provides a matrix representation of the Artin group $A[\Gamma]$ and this representation are faithful if the rank is two \cite{MR0938643}. The  representation $\sigma: A[\Gamma]\rightarrow \GL(V)$ is called the {\it generalised Burau representation} of the Artin group $A[\Gamma]$.
\par
 Note that Tits representation for Coxeter groups can be obtained from $K$ by substituting $s=t=1$. Thus, form $K$  can be seen as a deformation of the bilinear form associated with the Coxeter matrix $M = (m_{i j} )$ of the Coxeter group $W$.

\begin{remark}
 It is also easy to see that this representation factors through a one-parameter Hecke algebra associated with Coxeter groups, with the parameter $q=st$. Indeed, the following relations hold for all $i = 1,\ldots,n$:
\begin{equation}\label{eq:hecke}
\sigma(a_i)^2 + (q-1)\sigma(a_i)-q =0.
\end{equation}
\end{remark}
For an integer $n \ge 2$, the \textit{braid group} $B_n$ is defined as the group generated by the set $$S = \{\sigma_1, \sigma_2, \dots, \sigma_{n-1}\}$$ and satisfying the following defining relations:
$$\sigma_i\sigma_{i+1}\sigma_i = \sigma_{i+1}\sigma_i\sigma_{i+1},  \text{ and } \sigma_i\sigma_j = \sigma_j\sigma_i \text{ whenever } |i-j| \geq 2 .$$

\medskip

Let $I_m$ stands for the $m\times m$ identity matrix for all integer $m\geq 1$. The generalised Burau representation $\sigma: B_n \to \GL(n-1;\mathbb R)$ is given by $\sigma_i \mapsto Z_i$, where 
\begin{small}
\begin{equation*}
Z_i = 
\begin{cases}
\left(\begin{array}{cc|c}
-st & s & 0 \\
0 & 1 & \\
\hline 0 & 0 & I_{n-3}
\end{array}\right) & \text{if}\; i= 1,\\
\\
\left(\begin{array}{c|ccc|c}I_{i-2} & 0 & 0 & 0 & 0 \\ \hline & 1 & 0 & 0 &\\ 0 & t & -st & s & 0 \\ & 0 & 0 & 1 & \\ \hline 0 & 0 & 0 & 0 & I_{n-(i+2)}\end{array}\right) & \text{if}\;  2\leq i \leq n-2,\\
\\
\left(\begin{array}{c|c}
I_{n-3} & 0 \\
\hline
0 & \begin{array}{cc}
1 & 0 \\
t & -st
\end{array}
\end{array}\right) & \text{if}\; i= n-1.
\end{cases}
\end{equation*}
\end{small}
\begin{remark}
By evaluating $s=1$, we obtain the transpose of the reduced Burau representation as given in \cite[Theorem 3.9]{MR2435235}, as well as the reduced Burau representation as given in \cite[Lemma 3.11.1]{MR0375281}.  
\end{remark}
\medskip

\section{The congruence subgroup problem for abstract groups}\label{section 3}
Given a representation $\phi:G \to \mathrm{GL}(n, \mathbb{Z})$ of a group $G$ and an integer $m \ge 2$, one defines the {\it principal congruence subgroup} $G[m]$ of level $m$ as the kernel of the composition $$G \to \GL(n, \mathbb{Z}) \rightarrow \GL(n, \mathbb{Z}_m).$$ Notice that $G[m] \le G[k]$ for each divisor $k$ of $m$. A finite index subgroup of $G$ containing some principal congruence subgroup is called a {\it congruence subgroup}. We say that the group $G$ has the {\it congruence subgroup property}, with respect to $\phi$, if every finite index subgroup is a congruence subgroup. The {\it congruence subgroup problem} asks whether a group has the congruence subgroup property.

The definition of a principal congruence subgroup depends on the choice of representation. We say that two integer representations, $\phi_1: G \to \GL(n, \mathbb{Z})$ and $\phi_2:G \to \GL(n, \mathbb{Z})$ are equivalent if there exists a matrix $P\in \mathrm{GL}(n, \mathbb{Z})$ such that $\phi_2(g)=A^{-1}\phi_1(g)A$ for all $g \in G$. In this case, for any $g\in G$, $\phi_1(g)$ is the identity matrix modulo $m$ if and only if $\phi_2(g)$ is the identity matrix modulo $m$ for all $m\geq 2$. Therefore, the notion of a principal congruence subgroup of a group $G$ is well-defined up to this equivalence relation.

Consider the representation $\rho_1:G\to \GL(n_1, \mathbb Z)$ and $\rho_2: H \to \GL(n_2, \mathbb Z)$. We can define the direct sum representation $\rho_1\times \rho_2: G \times H \to \GL(n_1+n_2, \mathbb Z)$ as 
$$
(\rho_1 \times \rho_2)(g, h) = \left(\begin{matrix}
\rho_1(g) & 0 \\
0 & \rho_2(h)
\end{matrix}\right).
$$

It is straightforward to see that $(G\times H) [m] = G[m]\times H[m]$ for each $m\geq 2$.  Indeed,  if $(g, h) \in G \times H$, the matrix
$(\rho_1 \times \rho_2)(g, h) $ is congruent to the identity modulo  $m$ if and only if both matrix $\rho_1(g)$ and $\rho_2(h)$ are congruent to the identity modulo  $m$. Therefore, we can conclude that $(g,h) \in (G \times H)[m]$ if and only if $g \in G[m] \text{ and } h \in H[m]$.

\begin{proposition}\label{prop:cspdirect}
If $(G\times H, \rho_1\times \rho_2)$ admits the congruence subgroup property, then both $(G,\rho_1)$ and $(H,\rho_2)$ admit the  congruence subgroup property.
\end{proposition}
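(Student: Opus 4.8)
The plan is to prove the implication directly: I will lift an arbitrary finite-index subgroup of a single factor to a finite-index subgroup of the product, apply the congruence subgroup property of the product there, and then pull the conclusion back to the factor using the identity $(G\times H)[m] = G[m]\times H[m]$ established just above the statement.

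First I would fix an arbitrary finite-index subgroup $N \leq G$ and form the subgroup $N \times H \leq G\times H$. Its index satisfies $[G\times H : N\times H] = [G:N]\cdot[H:H] = [G:N] < \infty$, so $N\times H$ is a finite-index subgroup of the product. By the hypothesis that $(G\times H, \rho_1\times\rho_2)$ has the congruence subgroup property, $N\times H$ is a congruence subgroup; that is, there exists an integer $m\geq 2$ with $(G\times H)[m] \leq N\times H$.

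Next I would invoke the product decomposition $(G\times H)[m] = G[m]\times H[m]$ recorded above. For any $g\in G[m]$, since the identity element $1_H$ lies in every principal congruence subgroup $H[m]$ (the identity maps to the identity matrix), the element $(g, 1_H)$ belongs to $G[m]\times H[m] = (G\times H)[m] \leq N\times H$. Reading off the first coordinate yields $g\in N$, hence $G[m]\leq N$ and $N$ is a congruence subgroup of $G$. As $N$ was an arbitrary finite-index subgroup, $(G,\rho_1)$ has the congruence subgroup property, and the argument for $(H,\rho_2)$ is identical with the roles of the two factors exchanged.

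I do not expect a serious obstacle here: the entire content is the fact that the congruence subgroups of a direct product split as products of congruence subgroups of the factors. The only two points requiring a little care are the finite-index computation $[G\times H : N\times H] = [G:N]$ and the observation that $1_H\in H[m]$, which together guarantee that membership of $(g,1_H)$ in $N\times H$ is equivalent to $g\in N$.
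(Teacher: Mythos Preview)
Your proof is correct and follows essentially the same approach as the paper: lift a finite-index subgroup $N\le G$ to $N\times H$, apply the congruence subgroup property of the product to obtain $(G\times H)[m]=G[m]\times H[m]\subset N\times H$, and project back to $G$. The paper phrases the last step as ``applying the projection homomorphism onto $G$'' rather than tracking the element $(g,1_H)$, but this is the same argument.
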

\begin{proof}
It is enough to show that $(G,\rho_1)$ has the congruence subgroup property. Let $K$ be a finite index subgroup of $G$. Then, $K\times H$ is a finite index subgroup of $G\times H$. Given that $G\times H$ has the congruence subgroup property, there exists an integer $m$ such that $G[m]\times H[m]=(G\times H)[m]\subset K\times H$. By applying the projection homomorphism onto $G$, we conclude that $G[m]\subset K$. This completes the proof of the proposition. 
\end{proof}

Given a group $G$, the {\it profinite topology} or the {\it Krull topology} on $G$ is generated by a sub-basis consisting of all finite index normal subgroups of $G$ and their left cosets \cite[Section 3.7]{surybook}. The profinite topology turns $G$ into a topological group, and we denote its completion by $\widehat{G}$. In addition, if $G$ is a group admitting an integral representation $\rho$, then we can define another topology called the {\it congruence topology} on $G$ by taking all principal congruence subgroups and their left cosets as a sub-basis. Let $\overline{G}$ be the completion of $G$ under the congruence topology. Both $\widehat{G}$ and $\overline{G}$ are profinite groups, and there is a surjective homomorphism $\widehat{G} \to \overline{G}$. The {\it congruence kernel} $C_{\rho}(G):=\ker(\widehat{G} \to \overline{G})$ is the kernel of this morphism. Then, the group $G$ admitting the congruence subgroup property with respect to $\rho$ is equivalent to the kernel $C_{\rho}(G)$ being trivial.

 Equivalently, the group $G$ has a congruence subgroup property with respect to $\rho$ if and only if the map  $\widehat{G}=\underleftarrow{\lim}G/U\to\underleftarrow{\lim}G/G[m]$ injective? where, $U$ ranges over all finite index normal subgroups of $G$, and $G[m]$ ranges over all principal congruence subgroup with respect to $\rho$. 

 It is easy to see that $\overline{G}$ can be seen as subgroup of $\GL(n,\hat{\mathbb Z})$, indeed, we have the following inclusions:
\begin{eqnarray*}
\underleftarrow{\lim}G/G[m] & \leq  &\underleftarrow{\lim}_{m\in\mathbb{N}} \GL(n,\mathbb Z_m)\\
 & \leq & \GL(n,\underleftarrow{\lim}_{m\in\mathbb{N}}\mathbb Z_m)=\GL(n,\hat{\mathbb Z}).
\end{eqnarray*}

Therefore, the congruence subgroup property is equivalent to the question: Is the congruence map  $\hat{G} \to \GL(n, \hat{\mathbb{Z}})$ injective?
\par

\begin{proposition}\label{thm:generalcsp}
Let $G$ be a group. If $G$ has a finite index subgroup that surjects onto a free non-abelian group, then the pair $(G,\rho)$ does not admit the congruence subgroup property for any representation $\rho:G \to \GL(n,\mathbb Z)$.
\end{proposition}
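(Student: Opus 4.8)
The plan is to produce, inside the finite-index subgroup $H$ that surjects onto a free non-abelian group $F$, a single finite-index subgroup that contains no principal congruence subgroup, and then transport this failure back to $G$. The first step records the (easy) inheritance of the congruence subgroup property under passage to finite-index subgroups: if $(G,\rho)$ has the property and $H\le G$ has finite index, then so does $(H,\rho|_H)$. Indeed, a finite-index subgroup $K\le H$ is also finite index in $G$, so the hypothesis yields $G[m]\subseteq K$ for some $m$; since then $G[m]\subseteq K\subseteq H$, we get $H[m]=H\cap G[m]=G[m]\subseteq K$, so $K$ is a congruence subgroup of $H$. Thus it suffices to show that $(H,\rho|_H)$ fails the property (equivalently, one works directly with a finite-index subgroup of $G$ contained in $H$, using the same identity $G[m]=H[m]$ whenever $G[m]\subseteq H$).

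Next I would exploit the surjection $H\twoheadrightarrow F$. Since $F$ is free of rank $\ge 2$ and every alternating group $A_k$ is generated by two elements, $F$—and hence $H$—surjects onto $A_k$ for every $k\ge 5$. I will fix some sufficiently large $k$ and let $N=\ker(H\twoheadrightarrow A_k)$, a finite-index subgroup of $H$. I claim $N$ contains no $H[m]$. If it did, then the simple group $A_k\cong H/N$ would be a quotient of $H/H[m]$, and the latter embeds in $\GL(n,\mathbb Z_m)$; hence $A_k$ would be a subquotient, and therefore (being simple, by Jordan--H\"older) a composition factor, of $\GL(n,\mathbb Z_m)$.

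The heart of the argument is then a composition-factor analysis. Writing $\mathbb Z_m=\prod_p \mathbb Z_{p^{e_p}}$ gives $\GL(n,\mathbb Z_m)=\prod_p \GL(n,\mathbb Z_{p^{e_p}})$, so any composition factor already occurs in a single factor $\GL(n,\mathbb Z_{p^e})$. Reduction modulo $p$ realises $\GL(n,\mathbb Z_{p^e})$ as an extension of $\GL(n,\mathbb F_p)$ by a normal $p$-group, so its only non-abelian composition factor is $\mathrm{PSL}(n,\mathbb F_p)$. Consequently $A_k\cong \mathrm{PSL}(n,\mathbb F_p)$ for some prime $p$. This is where the contradiction appears: for large $k$ the alternating group $A_k$ is isomorphic to no projective special linear group of the fixed degree $n$. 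For our fixed $n$ this follows by comparing the minimal faithful permutation degrees $k$ of $A_k$ and $(p^n-1)/(p-1)$ of $\mathrm{PSL}(n,\mathbb F_p)$ together with the order identity $k!/2=|\mathrm{PSL}(n,\mathbb F_p)|$: the forced relation $k=(p^n-1)/(p-1)$ makes $p$ of order $k^{1/(n-1)}$, whence the right-hand side grows only polynomially in $k$ while the left-hand side grows super-exponentially, so $k$ is bounded in terms of $n$. Alternatively one simply invokes the classification of the exceptional isomorphisms between alternating and Lie-type simple groups (the only coincidences being $A_5,A_6,A_8$), after which any $k\ge 9$ works uniformly in $n$. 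Choosing $k$ beyond this bound, $N$ is a finite-index subgroup of $H$ containing no principal congruence subgroup, so $(H,\rho|_H)$, and therefore $(G,\rho)$, fails the congruence subgroup property.

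The main obstacle is the final step—ruling out large alternating groups as composition factors of $\GL(n,\mathbb Z_m)$. The reduction to $\mathrm{PSL}(n,\mathbb F_p)$ is routine, but the non-isomorphism $A_k\not\cong\mathrm{PSL}(n,\mathbb F_p)$ for large $k$ rests either on the classical (yet non-trivial) values of minimal permutation degrees of the two families or on the classification of finite simple groups; one must also take care that ``simple subquotient'' is correctly upgraded to ``composition factor'' and that the direct-product decomposition of $\GL(n,\mathbb Z_m)$ and the $p$-group kernel of reduction modulo $p$ are handled exactly.
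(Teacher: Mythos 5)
Your overall strategy (push the alternating quotients of the free group back to finite-index normal subgroups of $H$, then show $\GL(n,\mathbb Z_m)$ cannot accommodate $A_k$ for large $k$) is sound in outline and is genuinely different from the paper's proof, which passes to profinite completions, uses projectivity of $\widehat{F_k}$ to embed $\widehat{F_k}$ into $\widehat{G}$, and then cites the fact that $\widehat{F_k}$ admits no continuous injection into $\GL(n,\hat{\mathbb Z})$. However, there is a genuine gap at the step you yourself flagged: the upgrade from ``simple subquotient'' to ``composition factor'' is false, and it cannot be repaired by more care. A simple subquotient of a finite group need not be a composition factor. Concretely, $A_{n+1}$ embeds into $\GL(n,\mathbb Z)$ via the $n$-dimensional standard representation of $S_{n+1}$, and reduction modulo any $m\ge 3$ is injective on this finite subgroup, so $A_{n+1}$ is a simple subgroup (hence subquotient) of $\GL(n,\mathbb Z_m)$; yet the only non-abelian composition factors of $\GL(n,\mathbb Z_m)$ are the groups $\mathrm{PSL}(n,\mathbb F_p)$ for $p\mid m$, and $A_{n+1}\not\cong \mathrm{PSL}(n,\mathbb F_p)$ for $n$ large (the orders do not match). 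So your composition-factor analysis, while correct as far as it goes, does not rule out $A_k$ arising as a quotient of a \emph{subgroup} of $\GL(n,\mathbb Z_m)$, which is exactly the configuration your argument produces ($A_k$ is a quotient of $H/H[m]\le \GL(n,\mathbb Z_m)$).

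The statement you actually need is: for each $n$ there is $k_0(n)$ such that for $k>k_0(n)$, $A_k$ is not a \emph{section} of $\GL(n,\mathbb Z_m)$ for any $m$. This is true, and your reduction modulo the solvable kernels (CRT plus the $p$-group kernel of reduction mod $p$, which does pass correctly to non-abelian simple sections) brings it down to: $A_k$ is not a section of $\GL(n,\mathbb F_p)$ for any prime $p$. But that is a substantially deeper fact than the composition-factor computation — it rests on lower bounds for the degrees of faithful projective modular representations of alternating groups (Wagner/James: degree at least $k-2$ for $k$ large, over any field), together with an argument handling the normal subgroup in the section, or on CFSG-based results on finite subgroups of linear groups. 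So either import such a result explicitly, or follow the paper's shorter route: the hypothesis gives a surjection $\widehat{N}\twoheadrightarrow\widehat{F_k}$, projectivity of free profinite groups splits it, so $\widehat{F_k}$ embeds in $\widehat{G}$, and $\widehat{F_k}$ admits no continuous injection into $\GL(n,\hat{\mathbb Z})$ \cite[Proposition 3.8.3]{surybook}, contradicting injectivity of the congruence map $\widehat{G}\to\GL(n,\hat{\mathbb Z})$. Your preliminary reduction to the finite-index subgroup $H$ (via $H[m]=H\cap G[m]$) is correct and unproblematic.
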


\begin{proof}
Let $N$ be a finite index subgroup of $G$ that surjects onto free non-abelian group of rank $k$. The profinite completion $\widehat{N}$ is a subgroup of $\widehat{G}$ that surjects onto the profinite completion of the free group $\widehat{F_k}$. Since $\widehat{F_k}$ is projective, there exists a subgroup $K$ of $\widehat{N}$, that is isomorphic to $\widehat{F_k}$. Thus, we have $\widehat{F_k}\cong K \subset \widehat{N} \subset \widehat{G}$, which does not admit any injective continuous homomorphism into $\GL(m, \hat{\mathbb Z})$ for any $m$ \cite[Proposition 3.8.3]{surybook}. This implies that $\widehat{G}$ does not admit an injective morphism into $\GL(n,\widehat{\mathbb Z})$. In particular, $(G,\rho)$ does not have congruence subgroup property. 
\end{proof}
\begin{corollary}
Let $G$ be a group. If $G$ is virtually free, then the pair $(G,\rho)$ does not admit the congruence subgroup property  for any representation $\rho:G \to \GL(n,\mathbb Z)$.
\end{corollary}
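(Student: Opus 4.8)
The plan is to deduce the corollary directly from Proposition~\ref{thm:generalcsp}, whose hypothesis only asks for a finite index subgroup of $G$ that surjects onto a free non-abelian group. By definition, saying that $G$ is virtually free means that $G$ contains a finite index free subgroup $F$, so the whole task is to check that $F$ (or a suitable variant of it) supplies this data.

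First I would treat the main case, where $F$ has rank at least two and is therefore free non-abelian. In this situation the identity homomorphism $F \to F$ exhibits $F$ as a finite index subgroup of $G$ that surjects onto a free non-abelian group, so Proposition~\ref{thm:generalcsp} applies verbatim and shows that $(G,\rho)$ fails the congruence subgroup property for every representation $\rho : G \to \GL(n,\mathbb{Z})$. This is the entire substance of the argument, and it is immediate once the rank is known to be large.

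All that remains is to understand the degenerate possibility $\operatorname{rank}(F) \le 1$. Here $F$ is trivial or infinite cyclic, so $G$ is finite or virtually infinite cyclic, hence virtually abelian; and in that regime the conclusion genuinely fails, since such groups can possess the congruence subgroup property (compare Theorem~\ref{thm:cspaffine}). This is the one point to flag: the corollary is to be read for virtually free groups that are not virtually cyclic. Under that understanding every finite index free subgroup necessarily has rank at least two—otherwise $G$ itself would be virtually cyclic—so we are always reduced to the main case. The only subtlety is thus the bookkeeping about the rank of the free subgroup; the substantive work has already been carried out in Proposition~\ref{thm:generalcsp}.
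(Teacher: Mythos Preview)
Your argument is exactly the intended one: the paper states the corollary immediately after Proposition~\ref{thm:generalcsp} with no separate proof, treating it as a direct consequence, and your deduction via the identity surjection from a finite index free subgroup is precisely that. Your observation about the degenerate case $\operatorname{rank}(F)\le 1$ is a valid caveat on the \emph{statement} rather than a defect in your proof; the paper implicitly reads ``virtually free'' as ``virtually free non-abelian'' here.
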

\medskip
\section{The congruence subgroup problem for affine Coxeter groups}\label{section 4}
The Tits representation of small Coxeter group is integral which makes the study of congruence subgroups of these groups relevant. In \cite{mypaper1}, the author, Naik, and Singh investigate which small Coxeter groups have the congruence subgroup property. If $\mathcal{P}$ denote a property of groups, then a group is said to be virtually $\mathcal{P}$ if it has a finite index subgroup with property $\mathcal{P}$. 

\begin{theorem}\cite[Theorem 3.8]{mypaper1}\label{p1:t3.8}
An infinite small Coxeter group which is not virtually abelian does not admit the congruence subgroup property with respect to the Tits representation of its corresponding small Coxeter system.
\end{theorem}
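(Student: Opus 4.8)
The plan is to deduce the theorem entirely from Proposition~\ref{thm:generalcsp}, which already rules out the congruence subgroup property for \emph{any} integral representation once the group is shown to be \emph{large}, that is, to possess a finite index subgroup surjecting onto a nonabelian free group. The smallness hypothesis plays no role in this reduction; its only function is to guarantee, via the criterion $m_{i,j}\in\{1,2,3,\infty\}$, that the Tits representation $\rho$ takes integral matrix values, so that principal congruence subgroups with respect to $\rho$ are defined in the first place. Thus the whole theorem reduces to the purely group-theoretic assertion that an infinite Coxeter group which is not virtually abelian is large.

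First I would reduce to the irreducible case. Writing the Coxeter graph as the disjoint union of its connected components gives a direct product decomposition $W = W_1\times\cdots\times W_r$ into irreducible Coxeter groups. Each irreducible factor is, according to the signature of the associated bilinear form $B$, either finite (positive definite), affine (positive semidefinite and degenerate), or of indefinite type. A finite Coxeter group is trivially virtually abelian, and an irreducible affine Coxeter group contains its translation lattice $\mathbb Z^{n}$ as a finite index subgroup, hence is virtually abelian as well. Since a direct product of finitely many virtually abelian groups is virtually abelian, the hypothesis that $W$ is \emph{not} virtually abelian forces at least one factor, say $W_0$, to be neither finite nor affine. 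It then suffices to show that this single factor $W_0$ is large: if $H_0\le W_0$ has finite index and surjects onto $F_2$, then $H_0\times\prod_{i\neq 0}W_i$ is a finite index subgroup of $W$ that surjects onto $F_2$ by composing the projection onto $H_0$ with the given surjection, so $W$ is large and Proposition~\ref{thm:generalcsp} applies.

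The crux, and the step I expect to be the main obstacle, is therefore the following largeness statement: every infinite, irreducible, non-affine (equivalently, indefinite type) Coxeter group is large. This is a genuinely deep input rather than a routine verification, and I would invoke it from the literature, namely the theorem of Margulis and Vinberg asserting that an infinite irreducible Coxeter group which is not affine admits a finite index subgroup with a nonabelian free quotient. The difficulty is precisely that largeness demands a \emph{surjection} onto $F_2$ after passing to finite index, which is substantially stronger than merely containing a free subgroup (the latter already follows from the Tits alternative). A self-contained route would require producing explicit virtual epimorphisms from the indefinite Coxeter diagram; in the small case at hand one can often shortcut this, since an indefinite small diagram either carries an $\infty$-labelled edge, exhibiting a free product structure that makes largeness transparent, or is simply laced of indefinite type, where suitable reflection subgroups can be displayed directly. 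Assembling these cases, or simply citing the general theorem, completes the argument.
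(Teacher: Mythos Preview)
Your argument is correct. Note, however, that this paper does not supply its own proof of the statement: Theorem~\ref{p1:t3.8} is quoted verbatim from \cite[Theorem~3.8]{mypaper1}, so there is no in-paper proof to compare against. That said, your strategy is precisely the one the present paper deploys for the Artin analogue in Theorem~\ref{mainthm:artin}: reduce to an irreducible component that is neither spherical nor affine, invoke the largeness result that such a Coxeter group has a finite index subgroup surjecting onto a nonabelian free group, and then apply Proposition~\ref{thm:generalcsp}. The paper cites Gonciulea \cite[Theorem~II]{MR2700693} for the largeness input rather than Margulis--Vinberg, but the content is the same. Your observation that smallness is used only to make the Tits representation integral, so that Proposition~\ref{thm:generalcsp} applies to it, is exactly right.
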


In this section, we will prove that an infinite small Coxeter group which is virtually abelian does have the congruence subgroup property. This will provide a complete answer to the congruence subgroup problem for small Coxeter groups with respect to Tits representation. In terms of Coxeter graph, a Coxeter group is virtually abelian if and only if each connected components of its Coxeter graph is either spherical or affine. 

The classification of irreducible affine Coxeter groups consists of four families of groups: $ \tilde{A}_n $ for $ n \geq 2 $, $ \tilde{B}_n $ for $ n \geq 3 $, $ \tilde{C}_n $ for $ n \geq 2$, and $ \tilde{D}_n $ for $ n \geq 4 $. Additionally, there are six groups: $\tilde{A}_1$, $ \tilde{E}_6 $, $ \tilde{E}_7 $, $ \tilde{E}_8 $, $ \tilde{F}_4 $, and $ \tilde{G}_2$.

 \begin{remark}
 Among the classification of irreducible affine Coxeter groups, the Coxeter group of types $\tilde{A}_n \ (n \geq 2)$,  $\tilde{D}_n \ (n \geq 4),$ $\tilde{E}_6, \tilde{E}_7,  \tilde{E}_8$ and $\tilde{A}_1$ are small Coxeter groups. Affine (Weyl) Coxeter groups are strongly rigid \cite[Main Theorem]{MR1760693} and hence other irreducible affine Coxeter groups $\tilde{B}_n,  \tilde{C}_n, \tilde{F}_4,$ and $\widetilde{G}_2$  are not small.
  \end{remark}

Let $\Gamma$ be a irreducible spherical crystallographic Coxeter graph with simple roots $\{\alpha_1, \ldots, \alpha_n\}$, and corresponding root system $\Phi[\Gamma]$. Let $\Phi[\Gamma]^\vee$ be its dual root system defined by 
$$
\Phi[\Gamma]^\vee=\{\alpha^\vee | \alpha \in \Phi[\Gamma]\}
$$
where the dual roots  $\alpha^\vee = \frac{2\alpha}{\langle \alpha, \alpha \rangle}. $

Let $Q(\Phi[\Gamma])=\bigoplus_{i=1}^n \mathbb{Z} \alpha_i$ (resp. $Q(\Phi[\Gamma]^\vee)=\bigoplus_{i=1}^n \mathbb{Z} \alpha_i^\vee$) denote the root lattice of $\Phi[\Gamma]$ (resp. its dual root lattice),  and let $P(\Phi[\Gamma])= \bigoplus_{i=1}^n \mathbb{Z} \omega_i$ denote the weight lattice of $\Phi[\Gamma]$, where $\omega_j$ is the $j$-th fundamental weight. It is well-known that if $\Gamma= A_n$, $D_n$, $E_6$, $E_7$ or $E_8$, then $Q(\Phi[\Gamma])=Q(\Phi[\Gamma]^\vee)$.

The finite Coxeter group $W[\Gamma] = \langle s_1, s_2, \dots, s_n \rangle$ acts on the Euclidean space $V$ spanned by the roots and the affine Coxeter group $W[\tilde{\Gamma}] = \langle s_0, s_1, \dots, s_n \rangle$
is generated by the reflections
$$
s_i = s_{\alpha_i} \quad (i=1, \dots, n), \quad \text{and} \quad s_0 = s_{\alpha_0,1}
$$
where $\alpha_0 := -\theta$ is the {\it affine simple root}, with $\theta$ the highest root in $\Phi[\Gamma]$ and $s_{\alpha_0,1}$ is the reflection across the affine hyperplane $H_{\alpha_0,1}=\{x\in V | \langle x,\alpha_0 \rangle=1\}$.

It is a fundamental fact that the affine Coxeter group decomposes as a semidirect product
$$
W[\tilde{\Gamma}] = Q \rtimes W[\Gamma],
$$
where $Q$ is the (free) abelian normal subgroup consisting of translations by vectors in the dual root lattice. In particular, the translation $t_{\alpha_i^\vee}$ by simple dual roots $\alpha_i^\vee$ ($i=1,\ldots, n$) forms a basis for $Q$.

Each translation by a simple dual roots is a conjugate of the product $s_0s_\theta$ by an element of $W[\Gamma]$. Indeed, for each simple root $\alpha_i$, there exists an element $w_i\in W[\Gamma]$ such that $w_i(\theta)=\alpha_i$, Now the translation by the simple dual root $\alpha_i^\vee$ can be expressed as $$t_{\alpha_i^\vee}=w_i s_0s_\theta w_i^{-1}=w_i s_0w_i^{-1}s_i .$$
\par

The element $s_\theta$ can be explicitly expressed as a product of the simple reflections $s_1, \dots, s_n$. For instance, when $\Gamma =A_n$ for $n\geq 2$, the simple roots are given by 
$$
p_i=e_{i+1}-e_i \quad \text { for } i=1,2,\ldots n
 $$
and the highest root is $\theta=e_{n+1}-e_1$. If we choose $w=s_1 s_2 \cdots s_{n-1}$, then $w\cdot p_n= \theta$ and hence $$s_\theta=s_1 s_2 \cdots s_{n-1} s_n s_{n-1} \cdots s_2 s_1.$$
 
Similarly, for $\Gamma = D_n$ with $n\geq 4$, the simple roots are given by 
$$
p_i=e_i-e_{i-1} \quad  \text { for } i = 1,\ldots n-1, \quad p_n = e_{n-1}+e_n,
$$
and the highest root is $\theta = e_{1} + e_{2}$. If we choose $w=s_{2}\cdots s_{n-3}s_{n-1}s_1\cdots s_{n-2}$ then $w\cdot p_n = \theta$ and hence 

$$ s_\theta=s_{2}\cdots s_{n-3}s_{n-1}s_1\cdots s_{n-2}  s_n s_{n-2}\cdots s_1 s_{n-1}s_{n-3}\cdots s_{2}.$$

Using similar computations or by utilizing sagemath, we can find the expression for $s_\theta$ in terms of simple reflection for the cases where $\Gamma =E_6, E_7$ and $E_8$ (see also \cite[Table 4]{MR1490241}).  The table below gives an expression of $s_\theta$ in terms of the simple reflections of irreducible affine small Coxeter groups.
 \begin{table}[H]\label{table:stheta}
  \begin{small}
  \begin{center}
  \begin{tabular}{|c|c|c|}
\hline
\textbf{Type} & \textbf{Highest root $ \theta $} & \textbf{Reflection $ s_\theta $ (as word in $ s_i $)} \\
\hline
$ A_n $ & $ \theta = \alpha_1 + \alpha_2 + \cdots + \alpha_n $ & $ s_\theta = s_1 s_2 \cdots s_n \cdots s_2 s_1 $ \\
\hline
$ D_n $ & $ \theta = \alpha_1 + \alpha_2 + 2\alpha_{3} + \cdots  + 2\alpha_{n-1} + \alpha_n $ 
& \begin{tabular}{c}$ s_\theta=ws_1w^{-1}$ where \\ $w=s_{2}\cdots s_{n-3}s_{n-1}s_1\cdots s_{n-2}$ \end{tabular} \\
\hline
$ E_6 $ & $ \theta = \alpha_1 + 2\alpha_2 + 2\alpha_3 + 3\alpha_4 + 2\alpha_5 + \alpha_6 $ 
&\begin{tabular}{c} $ s_\theta =ws_1w^{-1}$ where \\$w= s_2 s_4 s_5 s_6 s_3 s_4 s_5 s_2 s_4 s_3 $ \end{tabular}\\
\hline
$ E_7 $ & $ \theta = 2\alpha_1 + 2\alpha_2 + 3\alpha_3 + 4\alpha_4 + 3\alpha_5 + 2\alpha_6 + \alpha_7 $ 
& \begin{tabular}{c} $ s_\theta = ws_1w^{-1}$ where \\ $w= s_1s_3s_4s_5 s_6 s_7 s_2 s_4 s_5 s_6 s_3 s_4 s_5 s_2 s_4 s_3$\end{tabular} \\
\hline
$ E_8 $ & $ \theta = 2\alpha_1 + 3\alpha_2 + 4\alpha_3 + 6\alpha_4 + 5\alpha_5 + 4\alpha_6 + 3\alpha_7 + 2\alpha_8 $ 
& \begin{tabular}{c} $s_\theta =ws_1w^{-1}$ where \\ $w=  s_8 s_7 s_6 s_5 s_4 s_3 s_1 s_2 s_4 s_5 s_6 s_7 s_8$\\ $s_3 s_4 s_5 s_6 s_7s_2s_4s_5s_6s_3 s_4 s_5s_2s_4s_3 $ \end{tabular}\\
\hline
\end{tabular}
\medskip
\caption{Expression for $s_\theta$ in terms for simple reflections.}
\end{center}
\end{small}
\end{table}
\begin{remark}\label{rmk:center}
Among the irreducible affine small Coxeter groups, the corresponding spherical Coxeter group has a non-trivial center for the types $\tilde{D}_{2n}$ (where $ n \geq 2 $), $ \tilde{E}_7 $, and $ \tilde{E}_8.$

For the Coxeter group $W[D_{2n}]$ (where  $ n \geq 2 $), the element $(s_1s_2\cdots s_{2n})^{2n-1}$ is a non-trivial central element. This can be verified by computing the Tits representation of $W[D_{2n}]$ on this element and observing that it equals $-I_{2n}$.
 Similarly, non-trivial central element for the Coxeter groups $W[E_7]$ and $W[E_8]$ can be found using Sagemath.

 A simple matrix caculation shows that, under the Tits representation of the affine Coxeter group $W[\tilde{D}_{2n}]$ (for $ n \geq 2 $), $ W[\tilde{E}_7] $, and $ W[\tilde{E}_8]$, the matrix corresponding to the non-trivial central element of the corresponding spherical Coxeter group is congruent to the identity modulo $n$ if and only if $n=2$. In other words, for $m\geq 3$, this element does not lie in the level $m$ principal congruence subgroups. For instance, if $\rho_{\tilde{D}_{6}}$ is the Tits representation of $W[\tilde{D}_{6}]$, then 
$$
\rho_{\tilde{D}_{6}}((s_1\ldots s_6)^5)=\left(\begin{matrix}
1 & 0 & 0 & 0 & 0 & 0 & 0 \\
2 & -1 & 0 & 0 & 0 & 0 & 0 \\
4 & 0 & -1 & 0 & 0 & 0 & 0 \\
4 & 0 & 0 & -1 & 0 & 0 & 0 \\
4 & 0 & 0 & 0 & -1 & 0 & 0 \\
2 & 0 & 0 & 0 & 0 & -1 & 0 \\
2 & 0 & 0 & 0 & 0 & 0 & -1
\end{matrix}\right).
$$
Similarly, if $w_{E_*}$ is non-trivial central element of $W[E_*]$ and $\rho_{\tilde{E}_*}$ is Tits representation of $W[\tilde{E}_*]$ where $*=7$ or $8$, then 
$$
\rho_{\tilde{E}_7}(w_{E_7})=\left(\begin{matrix}
1 & 0 & 0 & 0 & 0 & 0 & 0 & 0 \\
4 & -1 & 0 & 0 & 0 & 0 & 0 & 0 \\
4 & 0 & -1 & 0 & 0 & 0 & 0 & 0 \\
6 & 0 & 0 & -1 & 0 & 0 & 0 & 0 \\
8 & 0 & 0 & 0 & -1 & 0 & 0 & 0 \\
6 & 0 & 0 & 0 & 0 & -1 & 0 & 0 \\
4 & 0 & 0 & 0 & 0 & 0 & -1 & 0 \\
2 & 0 & 0 & 0 & 0 & 0 & 0 & -1
\end{matrix}\right)
$$
and 
$$
\rho_{\tilde{E}_8}(w_{E_8})=\left(\begin{matrix}
1 & 0 & 0 & 0 & 0 & 0 & 0 & 0 & 0 \\
4 & -1 & 0 & 0 & 0 & 0 & 0 & 0 & 0 \\
6 & 0 & -1 & 0 & 0 & 0 & 0 & 0 & 0 \\
8 & 0 & 0 & -1 & 0 & 0 & 0 & 0 & 0 \\
12 & 0 & 0 & 0 & -1 & 0 & 0 & 0 & 0 \\
10 & 0 & 0 & 0 & 0 & -1 & 0 & 0 & 0 \\
8 & 0 & 0 & 0 & 0 & 0 & -1 & 0 & 0 \\
6 & 0 & 0 & 0 & 0 & 0 & 0 & -1 & 0 \\
4 & 0 & 0 & 0 & 0 & 0 & 0 & 0 & -1
\end{matrix}\right).
$$
\end{remark}

The classification of normal subgroups of finite and affine Coxeter groups is well-known \cite[Theorem 0.2]{MR1490241}. 
  \begin{proposition}\label{affnor}
 If $W[\tilde{\Gamma}]$ is a irreducible affine Coxeter group and $H$ is a normal subgroup of $W[\tilde{\Gamma}]$, then one of the following is true:
 \begin{enumerate}
\item $H$ is a $W[\Gamma]$-invariant subgroup of $Q$;
\item $H$ is an extension of a $W[\Gamma]$-invariant subgroup $L$ of $Q$ containing $2 Q$ by the center of $W[\Gamma]$;
\item There exists a graph homomorphism $\psi: W[\tilde{\Gamma}] \rightarrow W[\Sigma]$ to a finite Coxeter group $W[\Sigma]$, and a subgroup $Z$ of the centre of $W[\Sigma]$, such that $H$ is the kernel of the homomorphism $W[\tilde{\Gamma}] \rightarrow W[\Sigma] / Z$ induced by $\psi$.
\end{enumerate}
  \end{proposition}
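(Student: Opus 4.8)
The plan is to work throughout with the semidirect product decomposition $W[\tilde{\Gamma}]=Q\rtimes W[\Gamma]$ recorded above, writing elements as pairs $(\lambda,w)$ with $\lambda\in Q$ and $w\in W[\Gamma]$, multiplication $(\lambda,w)(\mu,v)=(\lambda+w\mu,wv)$, and $t_\lambda=(\lambda,1)$ the translation by $\lambda$. To a normal subgroup $H\trianglelefteq W[\tilde{\Gamma}]$ I would attach two invariants: the intersection $L:=H\cap Q$ and the image $\bar H:=p(H)$ under the projection $p:W[\tilde{\Gamma}]\to W[\Gamma]$. Since $H$ and $Q$ are both normal, $L$ is a normal subgroup of $W[\tilde{\Gamma}]$ contained in $Q$, hence a $W[\Gamma]$-invariant sublattice; and since $p$ is surjective, $\bar H$ is a normal subgroup of the finite irreducible Coxeter group $W[\Gamma]$. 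These assemble into a short exact sequence $1\to L\to H\to\bar H\to 1$, so the classification reduces to determining, for each normal subgroup $\bar H$ of $W[\Gamma]$, which $W[\Gamma]$-invariant lattices $L$ can occur and how the lifts of $\bar H$ fit together.

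The computational engine linking the two invariants is the translation commutator. For any $(q,w)\in H$ and any $\lambda\in Q$ one has
\[
[t_\lambda,(q,w)]=t_\lambda(q,w)t_\lambda^{-1}(q,w)^{-1}=\big((1-w)\lambda,\,1\big)=t_{(1-w)\lambda}\in H\cap Q=L,
\]
so that $L$ contains the $W[\Gamma]$-invariant sublattice $L(\bar H):=\sum_{w\in\bar H}(1-w)Q$. Two instances govern the trichotomy: if the nontrivial central element $w_0=-1$ lies in $\bar H$, then $(1-(-1))Q=2Q\subseteq L$; and if a reflection $s_\alpha$ lies in $\bar H$, then $(1-s_\alpha)Q=\mathbb{Z}\alpha\subseteq L$, so by $W[\Gamma]$-invariance $L$ contains the $\mathbb{Z}$-span of the entire $W[\Gamma]$-orbit of $\alpha$, a finite-index invariant sublattice which, in the simply-laced case relevant here, is all of $Q$.

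With this engine I would case-split on $\bar H$, invoking the classification of normal subgroups of the finite irreducible Coxeter group $W[\Gamma]$. If $\bar H=1$, then $H=L\subseteq Q$ is $W[\Gamma]$-invariant, which is case (1). If $\bar H$ equals the nontrivial centre $Z(W[\Gamma])=\{1,w_0\}$, the engine forces $2Q\subseteq L$, and a short check (the $q$ with $(q,-1)\in H$ form a single $L$-coset, constrained by $(w-1)q\in L$ and normality under translations) shows $H$ is precisely an extension of the $W[\Gamma]$-invariant lattice $L\supseteq 2Q$ by $Z(W[\Gamma])$, which is case (2). In every remaining situation $\bar H$ contains an element that is neither trivial nor central; the engine then pins $L$ down to a specific large invariant lattice (for a reflection, $L=Q$), and I would verify that the quotient $W[\tilde{\Gamma}]/H$ is generated by the images of $s_0,\dots,s_n$ subject to Coxeter relations, hence is a finite Coxeter group $W[\Sigma]$ modulo a central subgroup $Z$, with the quotient map induced by a graph homomorphism $\psi:W[\tilde{\Gamma}]\to W[\Sigma]$; this is case (3). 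A converse verification that each listed datum genuinely defines a normal subgroup closes the argument.

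The main obstacle is case (3). It rests, first, on the precise classification of normal subgroups $\bar H$ of the finite irreducible Weyl group $W[\Gamma]$ — routine for types $A_n$ and $E_6,E_7,E_8$, but genuinely delicate for $D_n$, whose abelian-by-symmetric structure yields extra normal subgroups and low-rank coincidences — and, second, on showing that collapsing the forced lattice $L$ really converts the affine diagram into an honest finite Coxeter presentation carried by a graph homomorphism, together with the correct identification of the central subgroup $Z$. By contrast, the $W[\Gamma]$-invariance of $L$, the realizability (converse) direction, and the cohomological bookkeeping describing which lifts of $\bar H$ close up into a subgroup are comparatively routine.
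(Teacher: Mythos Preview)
The paper does not prove this proposition: it is quoted from Maxwell \cite[Theorem~0.2]{MR1490241}, as the sentence immediately preceding the statement makes explicit. There is therefore no proof in the paper to compare your attempt against.

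That said, your outline follows the natural route, and it is essentially Maxwell's strategy: analyse $H$ through the pair $(L,\bar H)=(H\cap Q,\,p(H))$, use translation commutators $[t_\lambda,(q,w)]=t_{(1-w)\lambda}$ to force $L\supseteq\sum_{w\in\bar H}(1-w)Q$, and then case-split on the normal subgroup $\bar H\trianglelefteq W[\Gamma]$. You have correctly isolated case~(3) as the place where the real content lies. Two cautions. First, your parenthetical ``in the simply-laced case relevant here'' is a genuine restriction: the proposition as stated covers all irreducible affine $\tilde\Gamma$, not only the small ones the paper later applies it to, so the analysis of $(1-s_\alpha)Q$ and its $W[\Gamma]$-orbit must be carried out in general. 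Second, and more substantively, the projection $p:W[\tilde\Gamma]\to W[\Gamma]$ sends $s_0$ to $s_\theta$, which is \emph{not} a simple reflection; hence $p$ itself is not a graph homomorphism in the sense defined in the paper. Consequently, even after you have pinned $L$ down, producing the graph homomorphism $\psi:W[\tilde\Gamma]\to W[\Sigma]$ demanded in~(3) is not a matter of composing $p$ with a quotient of $W[\Gamma]$; one must construct a genuinely different Coxeter target $\Sigma$ and verify the Coxeter relations among the images of $s_0,\dots,s_n$ directly. This is where Maxwell's argument does its real work, and your sketch would need to carry it out explicitly to be complete. (Your proposed converse verification is not needed: the proposition asserts only that every normal subgroup falls into one of the three classes, not that every listed datum arises.)
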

\begin{remark}\label{rmk:images_jtits}
If $W$ is a small Coxeter group with Coxeter generators $s_1,\ldots, s_n$, then the under the Tits representation $\rho$, the image of each generator $s_j$  is a matrix with $-1$ in the $(j,j)$-th entry, $1$ in all other diagonal entries, arbitrary entries in the $j$-th row, and zero in all non-diagonal entries outside the $j$-th row.
That is, 
$$
\rho(s_j)=\left(\begin{array}{c|c|c}I_{j-1} & 0 & 0 \\
\hline
a_{j,1}\ \ldots \ a_{j,j-1} & -1 & a_{j,j+1} \ \ldots \ a_{j,n} \\
\hline
0 & 0 & I_{n-j}
\end{array}\right),
$$
where $a_{j,k}=\begin{cases}
1 & \text {if } m_{j,k}=3, \\
2 & \text {if } m_{j,k}=\infty, \\
0 & \text {if } m_{j,k}=2.
\end{cases}$
\end{remark}
\begin{proposition}
If $\tilde{\Gamma}$ is an irreducible affine small Coxeter graph with the corresponding spherical Coxeter graph $\Gamma$, then the principal congruence subgroup $W[\tilde{\Gamma}][m]$ $(m\geq 3)$ is a $W[\Gamma]$-invariant subgroup of the translation subgroup $Q$. When $m=2$, $W[\tilde{\Gamma}][2]$ is an extension of a $W[\Gamma]$-invariant subgroup $L$ of $Q$ containing $2Q$ by the center of $W[\Gamma]$.
\end{proposition}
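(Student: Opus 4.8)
The plan is to compute $W[\tilde{\Gamma}][m]$ explicitly from the semidirect decomposition $W[\tilde{\Gamma}]=Q\rtimes W[\Gamma]$ and then read off which of the three cases of Proposition~\ref{affnor} occurs, noting that $W[\tilde{\Gamma}][m]$ is normal, being the kernel of $W[\tilde{\Gamma}]\to\GL(n+1,\mathbb{Z}_m)$. The geometric input I would use is that the Tits form of an affine Coxeter group is positive semidefinite with one-dimensional radical $\mathbb{R}\delta$, where $\delta$ is the primitive null root; since $W[\tilde{\Gamma}]$ preserves the form it preserves $\mathbb{Z}\delta$, and the induced action on the rank-$n$ quotient lattice is exactly the finite Tits representation $\rho_{\mathrm{fin}}$ of $W[\Gamma]$, with the translations $Q$ acting trivially on the quotient.

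Choosing an integral basis adapted to $0\subset\mathbb{Z}\delta\subset(\text{affine root lattice})$, every $g=t_\lambda w$ (with $\lambda\in Q$, $w\in W[\Gamma]$) has block upper-triangular image $\rho(g)=\left(\begin{smallmatrix}1 & * \\ 0 & \rho_{\mathrm{fin}}(w)\end{smallmatrix}\right)$, whose lower diagonal block is the finite Tits matrix of $w$ and whose off-diagonal row is $\mathbb{Z}$-linear in $\lambda$. Reducing mod $m$, the diagonal block gives the decisive implication: if $g\in W[\tilde{\Gamma}][m]$ then $\rho_{\mathrm{fin}}(w)\equiv I\pmod m$, i.e. $w$ lies in the level-$m$ congruence subgroup $W[\Gamma][m]$ of the finite part. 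Hence the projection $\pi\colon W[\tilde{\Gamma}]\to W[\Gamma]$ satisfies $\pi\big(W[\tilde{\Gamma}][m]\big)\subseteq W[\Gamma][m]$, while $mQ\subseteq W[\tilde{\Gamma}][m]$ because a pure translation $t_{m\mu}$ has off-diagonal row $m\cdot(\text{coordinates of }\mu)\equiv 0$.

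For $m\ge 3$ I would invoke Minkowski's lemma: $\rho_{\mathrm{fin}}$ embeds the finite group $W[\Gamma]$ in $\GL(n,\mathbb{Z})$, and reduction mod $m$ is injective on a finite subgroup for $m\ge 3$, so $W[\Gamma][m]=\{1\}$. Then $\pi(W[\tilde{\Gamma}][m])=\{1\}$, i.e. $W[\tilde{\Gamma}][m]\subseteq\ker\pi=Q$; being normal in $W[\tilde{\Gamma}]$ and contained in the abelian $Q$ it is $W[\Gamma]$-invariant, which is case~(1) (and in fact equals $\{\lambda\in Q:\lambda\equiv 0\bmod m\}$). For $m=2$, Minkowski fails; here the nontrivial central element $w_*$ of $W[\Gamma]$ (present for $\tilde D_{2n},\tilde E_7,\tilde E_8$) acts as $-I$ on the reflection space, and by Remark~\ref{rmk:center} its full affine Tits matrix is $\equiv I\pmod 2$, so $w_*\in W[\tilde{\Gamma}][2]$. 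Thus $W[\tilde{\Gamma}][2]\not\subseteq Q$, and with $L:=W[\tilde{\Gamma}][2]\cap Q\supseteq 2Q$ (a $W[\Gamma]$-invariant sublattice) I would exhibit $W[\tilde{\Gamma}][2]$ as an extension of $L$ by $\pi(W[\tilde{\Gamma}][2])$, matching case~(2) once this image is shown to be $Z(W[\Gamma])$.

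The main obstacle is exactly that last identification, namely $W[\Gamma][2]=Z(W[\Gamma])$; the inclusion $\supseteq$ is Remark~\ref{rmk:center}, and the reverse inclusion $W[\Gamma][2]\subseteq Z(W[\Gamma])$ is the statement that the mod-$2$ reduction of the finite Tits representation is faithful modulo the centre. In the language of Proposition~\ref{affnor} this is precisely the step that excludes case~(3): once $\pi(W[\tilde{\Gamma}][2])\subseteq Z(W[\Gamma])$, the group $W[\tilde{\Gamma}][2]$ is forced into the preimage of the centre, leaving only case~(2). I expect this to require the explicit generator images of Remark~\ref{rmk:images_jtits} together with a type-by-type analysis of the simply-laced finite groups: for $A_n=S_{n+1}$ the action on $Q/2Q\cong\{x\in\mathbb{F}_2^{\,n+1}:\sum x_i=0\}$ is faithful by a direct pair-preservation argument, whereas for $D_n,E_6,E_7,E_8$ one uses the classical identifications of $W/Z$ with orthogonal or symplectic groups over $\mathbb{F}_2$ (e.g. $W(E_8)/\{\pm1\}\cong O_8^+(2)$ and $W(E_7)/\{\pm1\}\cong\mathrm{Sp}_6(2)$), or a direct computation as suggested by the remarks. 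Granting this, case~(2) holds with $L=2Q$ and central part $Z(W[\Gamma])$, completing the proof.
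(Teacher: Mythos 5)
Your argument splits into two halves of very different status. The $m\ge 3$ half is complete and takes a genuinely different route from the paper: writing $\rho(t_\lambda w)=\left(\begin{smallmatrix}1&*\\0&\rho_{\mathrm{fin}}(w)\end{smallmatrix}\right)$ in a basis adapted to the null root and applying Minkowski's lemma to the finite block gives $W[\tilde\Gamma][m]\subseteq Q$ directly, with no appeal to the classification of normal subgroups. The paper instead invokes Proposition~\ref{affnor} (Maxwell's trichotomy), excludes the graph-homomorphism case by the elementary observation from Remark~\ref{rmk:images_jtits} that no $s_j$ and no $s_is_j$ reduces to the identity mod $m$, and then uses Remark~\ref{rmk:center} to decide between the two remaining cases. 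Your version should state explicitly that the null root $\delta$ is primitive in the affine root lattice (true for all simply-laced affine types, since the mark of $\alpha_0$ is $1$), so that the adapted basis is an integral change of basis and the congruence subgroup is unchanged. One parenthetical claim is wrong: $W[\tilde\Gamma][m]$ need not equal $mQ$ --- e.g.\ $W[\tilde A_1][m]=\langle(s_0s_1)^{m/2}\rangle$ for even $m\ge 4$, and in general the paper only places $W[\tilde\Gamma][m]$ between $mQ$ and $mP$ --- but this does not affect the statement being proved.

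The genuine gap is in the $m=2$ case, and you have flagged it yourself: your argument needs $\pi\big(W[\tilde\Gamma][2]\big)=Z(W[\Gamma])$, whose nontrivial half is the inclusion $W[\Gamma][2]\subseteq Z(W[\Gamma])$, i.e.\ faithfulness of the mod-$2$ finite Tits representation modulo the centre. You only say you ``expect'' this to follow from a type-by-type analysis (via $W(E_8)/\{\pm1\}\cong O_8^+(2)$, etc.) and then ``grant'' it; as written the entire $m=2$ conclusion rests on an unproved assertion, so the proof is incomplete there. The claim is true for $A_n$, $D_n$, $E_6$, $E_7$, $E_8$, but it genuinely requires an argument in each type and is not a one-line observation. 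The paper circumvents this entirely: since $W[\tilde\Gamma][2]$ is normal, Proposition~\ref{affnor} leaves three possibilities; case (3) is excluded by the matrix check on $s_j$ and $s_is_j$, and the presence of the central element in $W[\tilde\Gamma][2]$ (Remark~\ref{rmk:center}) then forces case (2) without ever computing $W[\Gamma][2]$. The most economical repair of your proof is to graft in exactly that step: having shown $W[\tilde\Gamma][2]$ is normal, contains $2Q$, and contains the central element but is not contained in $Q$, quote Proposition~\ref{affnor} and exclude case (3) as the paper does, rather than proving mod-$2$ faithfulness by hand.
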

\begin{proof}
 The Coxeter group of type $\tilde{A}_n \ (n \geq 2)$,  $\tilde{D}_n \ (n \geq 4),$ $\tilde{E}_6$, $\tilde{E}_7$,  $\tilde{E}_8$ and $\tilde{A}_1$ are the only irreducible affine small Coxeter group. By Remark \ref{rmk:center}, the non-trivial element (if it exists) of the center of the corresponding spherical Coxeter group  is in $W[\tilde{\Gamma}][m]$ if and only if  $m=2$. By Proposition \ref{affnor}, it is enough to show that $W[\tilde{\Gamma}][m]$ is not the kernel of homomorphism of the form $W[\tilde{\Gamma}]\xrightarrow{q} W[\Sigma] \to W[\Sigma]/Z(W[\Sigma])$, where $q$ is a graph homomorphism and $W[\Sigma]$ is finite Coxeter group.

 By definition, any graph homomorphism from an irreducible affine Coxeter graph to a spherical Coxeter graph should map a generator to the identity element or map some pairs of generator  to the same generator in $W[\Sigma]$. 

When $\tilde{\Gamma}$ is an irreducible affine Coxeter graphs, using the Remark~\ref{rmk:images_jtits},  the image of each generator $s_j$ under the Tits representation $\rho$ atleast one non-diagonal entry of  $j$-th row is equal to $1$. Thus, we have $$\rho(s_j)\neq I \mod (m).$$

 For $i\neq j$, the image of  $s_is_j$ under $\rho $ is a matrix with the $i$-th and $j$-th row possibly containing non-zero entries with $j$-th row has atleast one non-diagonal entry equal to $1$. Thus, we have $$\rho(s_is_j)\neq I \mod (m).$$ Thus, for all $m\geq 2$, $W[\tilde{\Gamma}][m]$ is not the kernel of any graph homomorphism. This completes the proof of the proposition.
\end{proof}
\begin{corollary}
If $W$ is a small Coxeter group which is virtually abelian, then the level $m$ principal congruence subgroups of $W$ is a free abelian group for $m\geq 3$. 
\end{corollary}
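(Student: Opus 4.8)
The plan is to reduce to the irreducible factors and treat the spherical and affine pieces separately. Since $W$ is small and virtually abelian, each connected component of its Coxeter graph is either spherical or affine, so $W$ decomposes as a direct product $W=\prod_i W_i$ of irreducible Coxeter groups, each of which is either finite (spherical) or affine. Because the defining bilinear form block-decomposes along the connected components, the Tits representation of $W$ is the direct sum of the Tits representations of the $W_i$. Hence the identity $(G\times H)[m]=G[m]\times H[m]$ established earlier for direct sums of integral representations yields $W[m]=\prod_i W_i[m]$. Since a direct product of free abelian groups is free abelian, it suffices to show that each $W_i[m]$ is free abelian for $m\geq 3$.

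For an affine factor $W_i=W[\tilde{\Gamma}]$, I would appeal directly to the preceding Proposition, which shows that $W[\tilde{\Gamma}][m]$ is a $W[\Gamma]$-invariant subgroup of the translation lattice $Q$ for all $m\geq 3$. As $Q$ is free abelian and every subgroup of a free abelian group is again free abelian, each affine factor contributes a free abelian group to the product.

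For a spherical factor $W_i$, the group is finite, and here I would invoke Minkowski's lemma: for $m\geq 3$ the kernel of the reduction $\GL(n,\mathbb{Z})\to\GL(n,\mathbb{Z}_m)$ is torsion-free. Since the Tits representation of a small Coxeter group is faithful and integral, $W_i[m]$ embeds as a finite subgroup of this torsion-free kernel and is therefore trivial. Thus the spherical factors contribute the trivial group and do not affect the outcome.

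Combining these observations, for $m\geq 3$ we obtain $W[m]\cong\prod_{W_i\text{ affine}}W_i[m]$, a direct product of subgroups of free abelian lattices, which is free abelian. The one point demanding care is the spherical case: the triviality of $W_i[m]$ rests on the torsion-freeness of the level-$m$ principal congruence subgroup of $\GL(n,\mathbb{Z})$ for $m\geq 3$ (Minkowski), whereas the affine case is immediate from the previous Proposition.
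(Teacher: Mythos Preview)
Your proof is correct and follows exactly the approach the paper intends: the Corollary is stated without proof immediately after the Proposition showing that $W[\tilde\Gamma][m]\subset Q$ for $m\ge 3$, and your argument is precisely the natural derivation---decompose $W$ along connected components, use the direct-sum structure of the Tits representation (Remark~\ref{rmk:direct}) to split $W[m]$ as a product, and apply the Proposition to each affine factor. The one ingredient you make explicit that the paper leaves unstated is the handling of spherical factors via Minkowski's lemma; the paper tacitly drops these (see the displayed equation $W[\Gamma][m]=W[\Gamma_1][m]\times\cdots\times W[\Gamma_l][m]$ in the proof of Theorem~\ref{thm:cspaffine}, which omits $W[\Gamma_{\mathrm{sph}}]$), so your invocation of Minkowski is a welcome clarification rather than a departure.
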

\begin{remark}\label{rmk:freeabe}
Any finite index subgroup of a free abelian group of rank $n$ is also a free abelian group of the same rank $n$.
\end{remark}

Let $\tilde{\Gamma}$ be an irreducible affine small Coxeter graph with the corresponding spherical Coxeter graph $\Gamma$. Every $W[\Gamma]$-invariant subgroup of $Q$ is an integral multiple of a lattice listed in \cite[Table 5]{MR1490241}. For instance, when $\Gamma=A_n$ $(n\neq 3)$,  $\Lambda_{k,d}=k(dQ+\mathbb Z(n+1)\omega_l)$ where $k\in \mathbb N$ and $d~|~n+1$ form the complete list of $W[\Gamma]$-invariant subgroup of $Q$. 

For each irreducible affine small Coxeter graphs $\tilde{\Gamma}$, using the Table~\ref{table:stheta}, a simple matrix computation shows that $$\rho_{\tilde{\Gamma}}(s_0s_\theta)^m = I_{n+1} \mod (m)$$ for all $m\geq 2$, and 
$$\rho_{\tilde{\Gamma}}(s_0s_\theta)^k \neq I_{n+1} \mod (m)$$ for all $k<m$,
 where $|V(\tilde{\Gamma})|=n+1$ and $\rho_{\tilde{\Gamma}}$ is the Tits representation of $W[\tilde{\Gamma}]$. Thus, 
$$
\rho_{\tilde{\Gamma}}(t^m_{\alpha_i^{\vee}})=\rho_{\tilde{\Gamma}}(w_i(s_0s_\theta)^mw_i^{-1}) = I_{n+1} \mod (m)
$$
 for all $m\geq 2$ and $\rho_{\tilde{\Gamma}}(t^k_{\alpha_i^{\vee}}) \neq I_{n+1} \mod (m)$ for all $k<m$.

 Since $W[\tilde{\Gamma}][m]$ is $W[\Gamma]$-invariant subgroup of $Q$, we have that $ W[\tilde{\Gamma}][m]$  lies between $m Q$ and $m P$, where $P$ is the weight lattice.

  \begin{remark}
  When $\Gamma=\tilde{A}_1$, for each \(m \geq 3\), we have 
\begin{equation}\label{csA1}
\begin{aligned}
W[\tilde{A}_1][m]  &=\left\{\begin{array}{ll}
 \langle (s_0s_1)^m \rangle & ~\text {if} ~m~\textrm{is odd},\\
\langle (s_0s_1)^{m/2}\rangle  & ~\text {if} ~m~\textrm{is even},
\end{array}\right.\\
&=\left\{\begin{array}{ll}
\Lambda_{m,1}  & ~\text {if} ~m~\textrm{is odd},\\
\Lambda_{\frac{m}{2},2}  & ~\text {if} ~m~\textrm{is even},
\end{array}\right.
\end{aligned}
\end{equation}
and $W[\tilde{A}_1]$ has the congruence subgroup property \cite[Proposition 3.20 and Corollary 3.21]{mypaper1}. 
\end{remark}
  
  \begin{theorem}\label{thm:cspaffine}
An infinite small Coxeter group which is virtually abelian admits the congruence subgroup property with respect to the Tits representation of its corresponding small Coxeter system.
\end{theorem}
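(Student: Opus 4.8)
The plan is to reduce the problem to the irreducible components of the Coxeter graph and then treat the spherical (finite) and affine pieces separately, so that the global congruence subgroup property follows from the component-wise one.

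First I would decompose the group. An infinite small Coxeter group $W$ that is virtually abelian has a Coxeter graph each of whose connected components is spherical or affine; writing these components as $\Gamma_1,\dots,\Gamma_r$ (with at least one affine, since $W$ is infinite) gives $W=W[\Gamma_1]\times\cdots\times W[\Gamma_r]$. Because distinct components share no edge, the bilinear form $B$ is block diagonal, so the Tits representation $\rho$ of $W$ is the direct sum $\rho_1\oplus\cdots\oplus\rho_r$ of the Tits representations of the factors, and consequently $W[m]=W[\Gamma_1][m]\times\cdots\times W[\Gamma_r][m]$ for every $m$. I would then promote Proposition~\ref{prop:cspdirect} to an equivalence: since the profinite completion preserves finite direct products one has $\widehat{W}=\prod_i\widehat{W[\Gamma_i]}$, and since inverse limits commute with finite products one has $\overline{W}=\varprojlim_m\prod_i W[\Gamma_i]/W[\Gamma_i][m]=\prod_i\overline{W[\Gamma_i]}$. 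Hence the congruence kernel factors as $C_\rho(W)=\prod_i C_{\rho_i}(W[\Gamma_i])$, and $W$ has the congruence subgroup property if and only if each $W[\Gamma_i]$ does. It therefore suffices to treat each irreducible factor.

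For a spherical factor $W[\Gamma_i]$ the group is finite, so $\widehat{W[\Gamma_i]}=W[\Gamma_i]$ and the congruence subgroup property is equivalent to $\bigcap_m W[\Gamma_i][m]=1$. Since $\bigcap_m m\mathbb Z=0$, a group element lies in $W[\Gamma_i][m]$ for every $m$ precisely when its image under the Tits representation is the identity matrix; as the Tits representation is faithful, $\bigcap_m W[\Gamma_i][m]=\ker\rho_i=1$, so finite Coxeter groups have the property. The affine factors carry the real content. The case $\tilde{A}_1$ is already known to have the property (see the remark preceding the statement and \cite{mypaper1}), so fix an irreducible affine small Coxeter graph $\tilde{\Gamma}$ of another type, with spherical companion $\Gamma$, translation lattice $Q$, weight lattice $P$, and set $f=[P:Q]$. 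By the propositions established above, for every $m\ge 3$ the subgroup $W[\tilde{\Gamma}][m]$ is a $W[\Gamma]$-invariant sublattice of $Q$ sandwiched as $mQ\subseteq W[\tilde{\Gamma}][m]\subseteq mP$. Given a finite index subgroup $K\le W[\tilde{\Gamma}]$, the intersection $K\cap Q$ has some finite index $N$ in $Q\cong\mathbb Z^{n}$, whence $NQ\subseteq K\cap Q$. Choosing $m=3Nf\ (\ge 3)$ and using $fP\subseteq Q$ gives
\[
W[\tilde{\Gamma}][m]\subseteq mP=3Nf\,P\subseteq 3N\,Q\subseteq NQ\subseteq K\cap Q\subseteq K,
\]
so $K$ contains the principal congruence subgroup $W[\tilde{\Gamma}][m]$; thus $W[\tilde{\Gamma}]$ has the property. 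Combined with the product reduction and the finite case, this proves the theorem.

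The main obstacle is the affine case, and within it the cofinality argument: one needs not only the easy containment $mQ\subseteq W[\tilde{\Gamma}][m]$ but also the reverse bound $W[\tilde{\Gamma}][m]\subseteq mP$, which is what forces the congruence subgroups to become arbitrarily small inside $Q$ and hence cofinal among its finite index subgroups. That bound rests on the classification of $W[\Gamma]$-invariant sublattices (Proposition~\ref{affnor} together with the computations from \cite{mypaper1}) recorded above; once it is in hand, the product reduction and the finite-group step are routine.
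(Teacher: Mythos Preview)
Your proof is correct and follows the same overall architecture as the paper (decompose into irreducible factors, handle spherical and affine pieces), but your treatment of the affine case is genuinely different and somewhat cleaner. The paper, for each affine type $\Gamma_k$, writes down an explicit finite set $\overline{\Delta}[\Gamma_k]$ of elements of $Q$ (certain multiples of fundamental weights, read off from Maxwell's tables) which together with the simple coroots generate every $W[\Gamma_k]$-invariant lattice between $Q$ and $P$; it then takes $m$ to be the lcm of the orders of the corresponding translations in $W/H$, so that $t_\alpha^m\in H$ for all these $\alpha$, forcing any invariant lattice between $mQ$ and $mP$---in particular $W[\tilde\Gamma_k][m]$---to lie in $H$. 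You instead use only the containment $W[\tilde\Gamma][m]\subseteq mP$ together with the uniform fact $fP\subseteq Q$ for $f=[P:Q]$, which lets you push $W[\tilde\Gamma][m]$ into $NQ\subseteq K$ by a single divisibility choice $m=3Nf$. Your argument avoids the type-by-type enumeration of $\overline{\Delta}[\Gamma_k]$ and is more conceptual; the paper's argument is more explicit and in principle yields sharper control on the level $m$ needed for a given $H$. Both arguments rest on the same preparatory input, namely the sandwich $mQ\subseteq W[\tilde\Gamma][m]\subseteq mP$ for $m\ge 3$. Your reduction to factors via the product decomposition of the congruence kernel is also a mild variant: the paper does not prove (or need) the converse of Proposition~\ref{prop:cspdirect}, since it works directly with a finite index subgroup of the full product and intersects it with each affine factor.
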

\begin{proof}
A Coxeter group is virtually abelian if and only if each connected components of its Coxeter graph is either spherical or affine. Let $\Gamma$ be a Coxeter graph such that the associated Coxeter group $W[\Gamma]$ is virtually abelian. Let $\Gamma_1, \ldots, \Gamma_l$ be the connected components of $\Gamma$ which are of affine type, and let $\Gamma_{\mathrm{sph}}$ be the spherical part of $\Gamma$. Then the Coxeter group decomposes as:
$$
W[\Gamma] = W[\Gamma_{\mathrm{sph}}] \times W[\Gamma_1] \times \cdots \times W[\Gamma_l].
$$
Further, if $W[\Gamma]$ is small, then for each $m\geq 2$, 
$$
W[\Gamma][m] = W[\Gamma_1][m] \times \cdots \times W[\Gamma_l][m].
$$
 Let $\Delta[\Gamma_k]=\{\alpha_{1,k}^{\vee},\ldots, \alpha_{n_k, k}^{\vee}\}$ denote the set of simple coroots of $\Phi[\Gamma_k]$, then, the union  $\Delta=\cup_{i=1}^l \Delta[\Gamma_i]$ forms a set of simple coroots for $\Phi[\cup_{i=1}^l \Gamma_i]$. 
 
 Let $$
 \begin{aligned}
 \overline{\Delta}[\Gamma_k]&=\begin{cases}\{(l+1)\omega_l\}  & ~\text {if} ~\Gamma_k=A_{l},\\
 \{2\omega_1\}\cup \{4\omega_j~|~ j=1,\ldots, l\}  & ~\text {if} ~\Gamma_k=D_l, \text{ l is odd}\\
  \{2\omega_j~|~ j=1,\ldots, l\}  & ~\text {if} ~\Gamma_k=D_l, \text{ l is even}\\
 \{3\omega_j ~|~ j=1,\ldots, 6\}  & ~\text {if} ~\Gamma_k=E_6,\\
 \{2\omega_j ~|~ j=1,\ldots, 7\}  & ~\text {if} ~\Gamma_k=E_7,\\
 \end{cases}
 \end{aligned}
 $$
 where $\omega_j$ is the $j$-th fundamental weight of $\Gamma_k$. Set $\overline{\Delta}[\Gamma_k]=\emptyset$ if $\Gamma_k=E_8$. It is easy to see that $\overline{\Delta}[\Gamma_k]\subset Q[\Phi[\Gamma_k]]$ and let $\overline{\Delta}=\cup_{i=1}^l \overline{\Delta}[\Gamma_i]$.

 Now, let $H$ be an arbitrary finite index subgroup of $W[\Gamma]$. For each $\alpha\in \Delta\cup\overline{\Delta}$, choose $m_{\alpha}\in \mathbb N$ such that $$t^{m_{\alpha}}_{\alpha} \in H.$$

Define $m = \mathrm{lcm} \{ m_{\alpha} \; | \; \alpha \in \Delta\cup\overline{\Delta} \}$, then, it follows that 
 \begin{equation}\label{tmalpha}
 t^m_{\alpha} \in H.
 \end{equation}

Now, for each $1\leq i \leq l$, the level $m$ principal congruence subgroup of $W[\Gamma_i]$ is equal to $m$ times one of the lattices listed in \cite[Table 5]{MR1490241} for the graph $\Gamma_i$. By Equation~\eqref{tmalpha}, every invariant sublattice of $W[\Gamma_i]$ between $mQ$ and $mP$ is contained in $H\cap W[\Gamma_i]$. In particular, $W[\Gamma_i][m]\subset H\cap W[\Gamma_i]$. Thus,  we have
 $$
 W[\Gamma][m] = W[\Gamma_1][m] \times \cdots \times W[\Gamma_l][m]\subset H.
 $$
This completes the proof of the theorem.

\end{proof}

\section{Congruence subgroup property for Artin groups}\label{section 5}
Let $A[\Gamma]$ be the Artin group associated with a Coxeter graph $\Gamma$ and let $\sigma: A[\Gamma] \rightarrow \GL(V)$ be its generalised Burau representation. By evaluating this representation at $s=1$ and $t=-1$, we obtain another representation $\tilde{\sigma}:A[\Gamma]\to \GL(n,\mathbb R)$, where $n$ is the rank of corresponding Coxeter system. 

\begin{example}
If $\Gamma$ is graph on $n$ vertices without any edges, then $A[\Gamma]$ is a free-abelian group and the representation $\tilde{\sigma}$ is a trivial representation.
\end{example}

In the case of the braid group, the representation $\tilde{\sigma}$ is a symplectic representation (up to conjugacy). The kernel of this symplectic representation is called {\it braid Torelli group}. This normal subgroup is a non-trivial subgroup of braid groups for $n\geq 2$ and is an infinite index subgroup for $n\geq 3$. These subgroups have been well-studied (see \cite{MR3323579}).

 \begin{proposition}\label{prop:centerkernel}
Let $\Gamma$ be a graph on $n$ vertices with no vertex of degree zero. If $b \in Z(A[\Gamma])$, then $\tilde{\sigma}(b)$ is a block matrix with each block either $I_m$ or $-I_m$, where $I_m$ is the identity matrix of order $m$.  In particular, if each connected component of $\Gamma$ contains an odd number of vertices, then $Z(A[\Gamma])\subset \ker(\tilde{\sigma})$. 
 \end{proposition}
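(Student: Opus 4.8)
The plan is to work entirely at the specialization $s=1$, $t=-1$ and to exploit the fact that there the generators act as transvections preserving a skew-symmetric form. First I would record the shape of the evaluated data. At $s=1,t=-1$ the matrix $K$ has zero diagonal (since $1+st=0$) and satisfies $K^{t}=-K$, so the form $\langle u,v\rangle=u^{t}Kv$ becomes skew-symmetric; moreover, between vertices lying in different connected components of $\Gamma$ the corresponding entries of $K$ vanish, because non-adjacency means $m_{i,j}=2$ and $\cos(\pi/m_{i,j})=0$. Each generator then acts by $\tilde{\sigma}(a_i)=I+N_i$, where $N_i(v)=-\langle e_i,v\rangle e_i$ is nilpotent of rank at most one with image $\mathbb{R}e_i$. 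The hypothesis that $\Gamma$ has no vertex of degree zero enters precisely here: vertex $i$ has a neighbour $j$, whence $\langle e_i,e_j\rangle\neq 0$, so the functional $v\mapsto\langle e_i,v\rangle$ is nonzero and $N_i\neq 0$. A short computation using $\langle e_i,e_i\rangle=0$ together with skew-symmetry shows that each $\tilde{\sigma}(a_i)$ is an isometry of $\langle\cdot,\cdot\rangle$; I would verify this once at the outset, as it is needed later.

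Now let $b\in Z(A[\Gamma])$ and set $M=\tilde{\sigma}(b)$. Centrality means $M$ commutes with every $\tilde{\sigma}(a_i)$, hence with every $N_i$. Writing out $MN_i=N_iM$ gives $\langle e_i,v\rangle\,Me_i=\langle e_i,Mv\rangle\,e_i$ for all $v$; choosing $v$ with $\langle e_i,v\rangle\neq 0$ (possible since $N_i\neq 0$) yields $Me_i=\mu_i e_i$ for a scalar $\mu_i$. As the $e_i$ form a basis, $M$ is diagonal in this basis. Feeding $v=e_j$ back into the same identity gives $(\mu_i-\mu_j)\langle e_i,e_j\rangle=0$, so $\mu_i=\mu_j$ whenever $i$ and $j$ are adjacent; by connectedness $\mu$ is constant on each connected component of $\Gamma$. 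Finally, since $M$ is a product of the form-preserving matrices $\tilde{\sigma}(a_i)$, it is itself an isometry, so for adjacent $i,j$ we get $\mu_i\mu_j\langle e_i,e_j\rangle=\langle e_i,e_j\rangle$ with $\langle e_i,e_j\rangle\neq 0$, forcing $\mu_i=\pm 1$. Thus $M$ is diagonal with a single sign $\pm 1$ on the coordinates of each component, which is exactly the asserted block form $\mathrm{diag}(\pm I_{m_1},\dots,\pm I_{m_k})$.

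For the ``in particular'' statement I would pass to the product decomposition $A[\Gamma]=A[\Gamma_1]\times\cdots\times A[\Gamma_k]$ indexed by the connected components $\Gamma_c$ (with $n_c=|V(\Gamma_c)|$), under which $Z(A[\Gamma])=\prod_c Z(A[\Gamma_c])$ and $\tilde{\sigma}$ is block diagonal. Writing $b=\prod_c b_c$ with $b_c\in Z(A[\Gamma_c])$, the block of $M$ on component $c$ is $\tilde{\sigma}(b_c)=\epsilon_c I_{n_c}$ with $\epsilon_c=\pm 1$ by the previous paragraph. Since each $\tilde{\sigma}(a_i)=I+N_i$ is unipotent of determinant $1$ and $b_c$ is a word in the generators of $A[\Gamma_c]$, that block has determinant $1$, so $\epsilon_c^{\,n_c}=1$. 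When $n_c$ is odd this forces $\epsilon_c=1$, hence every block is the identity, $M=I$, and $b\in\ker(\tilde{\sigma})$.

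I expect the main obstacle to be bookkeeping rather than conceptual: pinning down the sign conventions in $\langle\cdot,\cdot\rangle$ at the specialization so that the isometry verification and the equivalence $\langle e_i,e_j\rangle\neq 0\iff i,j \text{ adjacent}$ are stated correctly, and confirming that the evaluated form is genuinely skew-symmetric (with its radical being exactly the common fixed space of the generators, which may be nonzero when a component has an odd number of vertices). Once these specialization facts are in place, the diagonality of $M$, its constancy on components, the sign constraint $\mu_i=\pm 1$, and the determinant count all follow formally.
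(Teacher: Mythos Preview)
Your argument is correct, but it takes a more structural route than the paper. The paper works directly with matrix entries: noting that $\tilde{\sigma}(a_j)$ differs from the identity only in the $j$-th row (with at least one nonzero off-diagonal entry there, by the degree hypothesis), it compares $\tilde{\sigma}(a_i^{-1}ba_i)$ with $\tilde{\sigma}(b)$ entrywise to force $\tilde{\sigma}(b)$ to be diagonal with equal diagonal entries along adjacent vertices, and then invokes $\det\tilde{\sigma}(b)=1$ to conclude that on each connected block the common scalar is a real root of unity, hence $\pm 1$ (and $+1$ when the block size is odd). You instead exploit the specialization geometry: the evaluated form is skew-symmetric with $\langle e_i,e_i\rangle=0$, the generators are symplectic transvections $I+N_i$, and commutation of $M=\tilde{\sigma}(b)$ with each rank-one $N_i$ yields diagonality and constancy on components, while the isometry property gives $\mu_i^2=1$; you then use the unipotence/determinant count only for the parity refinement. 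Your approach buys a cleaner conceptual picture (the image sits in the isometry group of the skew form, and the centralizer of rank-one transvections is easy to pin down), at the cost of one extra verification (that $\tilde{\sigma}(a_i)$ preserves $\langle\cdot,\cdot\rangle$, which you correctly flag). The paper's approach is more elementary and self-contained, needing no mention of the form at all; its single determinant step simultaneously handles both the $\pm 1$ constraint and the odd-size conclusion.
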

    
\begin{proof}
Let $A[\Gamma]$ is an Artin group with Artin generators $a_1,\ldots, a_n$.  Under $\tilde{\sigma} $, the image of each generator $a_j$  is a matrix with $1$ in the diagonal entries, arbitrary entries in the $j$-th row, and zero in all non-diagonal entries outside the $j$-th row. Since $\Gamma$ has no vertices of degree zero, the $j$-th row of $\tilde{\sigma}(a_j)$ contains atleast one non-zero non-diagonal entry.

 Let $b \in Z(A[\Gamma])$ and $1\leq i\leq n$.  By comparing the matrices $\tilde{\sigma}(a_i^{-1}b a_i))$ and $\tilde{\sigma}(b)$, along with the fact that $\Gamma$ has no vertices of degree zero, we can conclude that all non-diagonal entries of $\tilde{\sigma}(b)$ are $0$ and the $(i,i)$-th and $(j,j)$-th entries of $\tilde{\sigma}(b)$ are same if $m_{i,j}\neq2$.  Furthermore, if $\Gamma$ is connected, then the matrix $\tilde{\sigma}(b)$ is a scalar matrix. Since the determinant of $\tilde{\sigma}(b)$ is equal to $1$, it follows that the scalar is a real $n$-th root of unity. Therefore, $\tilde{\sigma}(b)$ is either $I_n$ or $-I_n$ when $n$ is even, and $\tilde{\sigma}(b)$ is $I_n$ when $n$ is odd. This completes the proof of the proposition.
  \end{proof}

\begin{remark}
If $\Gamma =F_4$, then $Z(A[F_4])=\langle \Delta \rangle$, but $\tilde{\sigma}(\Delta)=-I_4$.
\end{remark}
\begin{corollary}\label{cor:kerspherical}
Let $\Gamma$ be a graph on $n$ vertices with no vertices of degree zero. If the group $Z(A[\Gamma])$ is non-trivial, then the kernel of the representation $\tilde{\sigma}$ is a non-trivial normal subgroup of $A[\Gamma]$. If $\Gamma$ is a spherical Coxeter graph with at least two vertices, then the representation $\tilde{\sigma}$ is not faithful.
\end{corollary}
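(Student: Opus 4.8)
The plan is to deduce both claims from Proposition~\ref{prop:centerkernel}, supplemented by the direct evaluation of $\tilde{\sigma}$ on generators attached to isolated vertices. I would first establish the statement about $Z(A[\Gamma])$, and then reduce the spherical case to it.

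For the first statement, choose a non-trivial $b\in Z(A[\Gamma])$. Because $\Gamma$ has no vertex of degree zero, Proposition~\ref{prop:centerkernel} applies and shows that $\tilde{\sigma}(b)$ is block-diagonal with each block $\pm I_m$; in particular $\tilde{\sigma}(b)^2=I$. Thus $\tilde{\sigma}$ sends the cyclic group $\langle b\rangle$ into a group of order at most two. Since a non-trivial central element of an Artin group has infinite order (the non-triviality of the centre being concentrated in spherical-type factors, which are torsion-free), $\langle b\rangle\cong\mathbb{Z}$, so $\tilde{\sigma}|_{\langle b\rangle}$ cannot be injective; explicitly $b^2\in\ker\tilde{\sigma}$ and $b^2\neq 1$. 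As the kernel of a homomorphism, $\ker\tilde{\sigma}$ is automatically normal, so it is the desired non-trivial normal subgroup.

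For the second statement, let $\Gamma$ be spherical with at least two vertices. I would split on whether $\Gamma$ has an isolated vertex, since Proposition~\ref{prop:centerkernel} requires every vertex to have positive degree. If some vertex $v$ is isolated, then row $v$ of $K$ has only its diagonal entry $1+st$, so $\langle e_v,w\rangle=(1+st)w_v$ vanishes at $s=1,\,t=-1$; hence $\tilde{\sigma}(a_v)=I$, and the non-trivial generator $a_v$ lies in $\ker\tilde{\sigma}$, so $\tilde{\sigma}$ is not faithful. If no vertex is isolated, then every connected component has at least two vertices, so each component is irreducible spherical and its Artin group has infinite cyclic centre; hence $Z(A[\Gamma])\neq 1$, and since $\Gamma$ now has no degree-zero vertex, the first statement yields $\ker\tilde{\sigma}\neq 1$, so again $\tilde{\sigma}$ is not faithful.

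I expect the only real subtlety to be the case analysis in the second statement: a spherical graph with at least two vertices can still contain degree-zero vertices (for example the graph with two non-adjacent vertices, where $A[\Gamma]\cong\mathbb{Z}\times\mathbb{Z}$ and $\tilde{\sigma}$ is trivial), so Proposition~\ref{prop:centerkernel} is not directly applicable and the isolated-vertex case must be disposed of by the explicit computation $\tilde{\sigma}(a_v)=I$. Everything else—that $\tilde{\sigma}(b)^2=I$ forces a finite cyclic image, that non-trivial central elements have infinite order, and that connected spherical components have infinite cyclic centre—is immediate from Proposition~\ref{prop:centerkernel} or recorded in the preliminaries.
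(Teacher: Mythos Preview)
Your argument is correct and, for the first statement, essentially identical to the paper's: both use Proposition~\ref{prop:centerkernel} to conclude that $b$ or $b^2$ lies in $\ker\tilde{\sigma}$ for any non-trivial central $b$, and then invoke torsion-freeness to see this element is non-trivial.

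For the second statement the two proofs diverge slightly. The paper simply asserts that a spherical Coxeter graph with at least two vertices ``satisfies the hypothesis of Proposition~\ref{prop:centerkernel}'' and concludes. Read as inheriting the corollary's opening hypothesis (no degree-zero vertices), this is automatic; read as a standalone claim, it is false (e.g.\ $A_1\times A_1$). You instead perform an explicit case split: if some vertex $v$ is isolated you compute $\tilde{\sigma}(a_v)=I$ directly, and otherwise you invoke the first part via the infinite cyclic centre of each connected spherical factor. Your approach is therefore more robust---it proves the spherical claim without any standing assumption on degree-zero vertices---at the cost of an extra (easy) case. The paper's approach is shorter but leans on the opening sentence of the corollary to supply the degree hypothesis.
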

\begin{proof}
If $b\in Z(A[\Gamma])$, then by Proposition \ref{prop:centerkernel}, either $b$ or $b^2$ is in the kernel of the representation $\tilde{\sigma}$. If $\Gamma$ is a spherical Coxeter graph with at least two vertices, then it is easy to see that $\Gamma$ satisfies the hypothesis of Proposition~\ref{prop:centerkernel}. This completes the proof of the corollary.
\end{proof}
For convenience of terminology, we say that a group is a {\it small Artin group} if it admits a Coxeter system such that each exponent $m_{i, j}$ is either $\infty$ or less than or equal to 3. For example, braid groups and right-angled Artin groups are small Artin groups.  The following is an immediate observation for small Artin groups.

\begin{lemma}\label{tits rep integral small}
The generalised Burau representation of an Artin group evaluated at $s=1$ and $t=-1$ is integral if and only if it is a small Artin group.
\end{lemma}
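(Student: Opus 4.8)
The plan is to reduce the statement to an elementary arithmetic fact about the numbers $2\cos(\pi/m_{i,j})$. First I would write down the generator matrices of $\tilde\sigma$ explicitly. Applying \eqref{generalised burau representation formula} and substituting $s=1$, $t=-1$, the endomorphism $\overline{\sigma}_j$ acts by $v\mapsto v-\langle e_j,v\rangle e_j$, so $\tilde\sigma(a_j)$ agrees with the identity outside its $j$-th row; in that row the diagonal entry is $-st=1$, while the off-diagonal entry in column $k$ equals $\pm 2\cos(\pi/m_{j,k})$, the sign being determined by whether $k<j$ (producing the factor $t=-1$) or $k>j$ (producing the factor $s=1$). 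This is exactly the shape already recorded in the proof of Proposition~\ref{prop:centerkernel}, and it is confirmed by the braid matrices $Z_i$ after setting $s=1$, $t=-1$.

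Next I would reduce integrality of the whole representation to integrality of the generators. Since $A[\Gamma]$ is generated by $a_1,\dots,a_n$, the map $\tilde\sigma$ takes values in $\GL(n,\mathbb Z)$ precisely when each $\tilde\sigma(a_j)^{\pm1}$ is integral. A matrix that agrees with the identity off a single row has determinant equal to its distinguished diagonal entry, so $\det\tilde\sigma(a_j)=-st=1$; hence each $\tilde\sigma(a_j)$ lies in $\SL(n,\mathbb Z)$ as soon as it is integral, and its inverse is then automatically integral by Cramer's rule. Consequently $\tilde\sigma$ is integral if and only if every off-diagonal entry $2\cos(\pi/m_{j,k})$ is an integer, that is, $2\cos(\pi/m_{i,j})\in\mathbb Z$ for all $i\neq j$.

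Finally I would establish the arithmetic dictionary: for $m\in\{2,3,4,\dots\}\cup\{\infty\}$ one has $2\cos(\pi/m)\in\mathbb Z$ if and only if $m\in\{2,3,\infty\}$. This is elementary: as $m$ grows the angle $\pi/m$ decreases, so $2\cos(\pi/m)$ increases monotonically from the value $0$ at $m=2$ through $1$ at $m=3$ up toward the limiting value $2$ at $m=\infty$ (with the convention $\cos(\pi/\infty)=1$, consistent with the value $-1$ assigned to the form when $m_{i,j}=\infty$). For every finite $m\ge 4$ it therefore lies strictly in the open interval $(1,2)$ and cannot be an integer. Putting these steps together, $\tilde\sigma$ is integral exactly when every exponent $m_{i,j}$ with $i\neq j$ lies in $\{2,3,\infty\}$, which is precisely the condition that the underlying Coxeter graph be small, i.e. that $A[\Gamma]$ be a small Artin group.

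The computations here are routine, so there is no serious obstacle; the only points requiring care are the bookkeeping that pins down which entries of $\tilde\sigma(a_j)$ are nonzero, the correct handling of the $m_{i,j}=\infty$ convention, and the passage from integrality of the generators to integrality of their inverses. The genuine mathematical content is the elementary classification of those $m$ for which $2\cos(\pi/m)$ is an integer.
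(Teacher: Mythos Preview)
Your proposal is correct and follows essentially the same route as the paper: both arguments observe that the matrix of each generator $\tilde\sigma(a_j)$ has entries drawn from $\{0,1,\pm 2\cos(\pi/m_{j,k})\}$, and then reduce the question to the elementary fact that $2\cos(\pi/m)\in\mathbb Z$ precisely for $m\in\{2,3,\infty\}$. Your version is simply more explicit---you spell out the determinant/inverse step and the monotonicity argument for $2\cos(\pi/m)$---whereas the paper dispatches both points in a single sentence.
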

\begin{proof}
Let $A[\Gamma]$ be a Artin group and $\sigma: A[\Gamma] \rightarrow \GL(V)$ given by $\sigma(a_i)=\overline{\sigma}_i$ be its generalised Burau representation. Clearly, the entries of the matrix of $\overline{\sigma}_i$ lie in $\{0, 1, -1, \pm\langle e_i,e_j\rangle \}$. It follows that $\pm\langle e_i,e_j\rangle = \pm 2  \cos(\frac{\pi}{m_{i,j}})$ is an integer if and only if  $m_{i,j}= 1,2,3$ or ${\infty}$.
\end{proof}
Thus, the matrix representation of a small Artin group is integral when we substitute $s=-t=1$ and in this case, we call $\tilde{\sigma}$ as {\it integral generalised Burau representation}. This makes the study of congruence subgroups of such groups relevant.
\begin{remark}
 Let $\Gamma$ and $\Omega$ be two spherical type Coxeter graphs. If $A[\Gamma]$ is isomorphic to $A[\Omega]$, then $\Gamma = \Omega$ \cite[Theorem 1.1]{MR2098788}. Futher, it can be possibly that a nonspherical type Artin group can be isomorphic to a spherical type Artin group. Thus, if $\Gamma$ is spherical, then $\Gamma$ admits a small spherical Coxeter system if and only if $\Gamma=A_n,B_{2k+1}, D_n, E_6, E_7, E_8$ and $I_6$.
 \end{remark}

\begin{remark}\label{rmk:direct}
 Let $\Gamma$ be a graph with two connected components $\Gamma_1$ and $\Gamma_2$. Then, we have $W[\Gamma]=W[\Gamma_1]\times W[\Gamma_2]$ and $A[\Gamma]=A[\Gamma_1]\times A[\Gamma_2]$. It is easy to see that the Tits representation of $W[\Gamma]$ is the direct sum of the Tits representations of $W[\Gamma_1]$ and $W[\Gamma_2]$. Similarly, the generalised Burau representation of $A[\Gamma]$ is the direct sum of the generalised Burau representations of $A[\Gamma_1]$ and $A[\Gamma_2]$.
\end{remark}

The braid group $B_3$ and affine Artin group of type $\tilde{A_1}$ are virtually free and hence they do not possess congruence subgroup property with respect to any integral representations. However, almost all Artin groups are not virtually-free groups. For instance, if $\Gamma$ is non-complete graph,  then $A[\Gamma]$ is not virtually-free groups. This is because $ A[\Gamma]$ contains a subgroup isomorphic to  $\mathbb{Z}^2$ as subgroup and for a virtually free group, every finitely generated subgroup of must also be virtually free and $\mathbb{Z}^2$ is not virtually free.

\begin{theorem}\label{mainthm:artin}
Let $\Gamma$ be a small Coxeter graph. If $\Gamma$ has a connected component that is not an affine Coxeter graph, then $A[\Gamma]$ does not admit the congruence subgroup property.
\end{theorem}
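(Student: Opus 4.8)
The plan is to reduce to a single connected component and then, inside the relevant Artin group, to exhibit a finite-index subgroup surjecting onto a non-abelian free group, after which Proposition~\ref{thm:generalcsp} finishes the argument (and in fact rules out the congruence subgroup property for every integral representation). By Remark~\ref{rmk:direct}, if $\Gamma_1,\dots,\Gamma_r$ are the connected components of $\Gamma$ then $A[\Gamma]=A[\Gamma_1]\times\cdots\times A[\Gamma_r]$ and $\tilde\sigma$ splits as the corresponding direct sum. Isolating one non-affine component $\Gamma_{i_0}$ and grouping the rest into a single factor, the contrapositive of Proposition~\ref{prop:cspdirect} shows that it is enough to treat $A[\Gamma_{i_0}]$. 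I may therefore assume $\Gamma$ is connected, small, and not affine; equivalently $W[\Gamma]$ is irreducible and its Tits form is either positive definite (the spherical case) or indefinite (the infinite, non-affine case).

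Suppose first that $\Gamma$ is not spherical. Then $W[\Gamma]$ is an infinite, irreducible, non-affine Coxeter group, hence of indefinite type, and is therefore \emph{large}: by Margulis--Vinberg it possesses a finite-index subgroup $H$ admitting a surjection onto $F_2$ (this is the same mechanism that forces the failure in Theorem~\ref{p1:t3.8}). Pulling back along the canonical projection $\pi\colon A[\Gamma]\to W[\Gamma]$, the preimage $\pi^{-1}(H)$ has finite index in $A[\Gamma]$ and surjects onto $H$, hence onto $F_2$. Proposition~\ref{thm:generalcsp} then gives the claim.

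Now suppose $\Gamma$ is spherical, so $W[\Gamma]$ is finite and the pure Artin group $P[\Gamma]=\ker\pi$ has finite index in $A[\Gamma]$. If $\Gamma$ is a single vertex then $A[\Gamma]\cong\mathbb Z$ and $\tilde\sigma$ is trivial, so every principal congruence subgroup equals $\mathbb Z$ whereas $\widehat{\mathbb Z}\neq 1$, and the property fails outright. Otherwise $\Gamma$ is of type $A_n$, $D_n$, $E_6$, $E_7$ or $E_8$ with at least one edge, hence contains a standard parabolic subdiagram of type $A_2$. Identifying $P[\Gamma]$ with the fundamental group of the complement $M(\Gamma)$ of the complexified reflection arrangement, I would delete every hyperplane except the three coming from this $A_2$ subsystem; the resulting inclusion of complements induces a surjection $\pi_1(M(\Gamma))\twoheadrightarrow\pi_1(M_{A_2})\cong F_2\times\mathbb Z$, because the meridians generating the target can be represented by small loops that lift to the source. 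Composing with the projection to $F_2$ exhibits a finite-index subgroup of $A[\Gamma]$ surjecting onto $F_2$, and Proposition~\ref{thm:generalcsp} applies once more.

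I expect the spherical case to be the crux: on the Coxeter side largeness is available off the shelf, whereas here one must know that finite-type Artin groups of rank at least two are large. The delicate points are the surjectivity of the deletion map on fundamental groups and the identification $\pi_1(M_{A_2})\cong F_2\times\mathbb Z$ for three concurrent hyperplanes; both are standard for arrangement complements but deserve careful statement. As a sanity check in type $A_n$ one can bypass arrangements entirely, using the strand-forgetting surjection $P_{n+1}\twoheadrightarrow P_3\cong F_2\times\mathbb Z$ of pure braid groups.
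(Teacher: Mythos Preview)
Your reduction to a connected component and your treatment of the infinite, non-spherical case coincide with the paper's: both pull back largeness of $W[\Gamma]$ (the paper cites Gonciulea rather than Margulis--Vinberg, but the content is the same) along $\pi$ and invoke Proposition~\ref{thm:generalcsp}.

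The spherical case, however, is handled quite differently. The paper does \emph{not} prove that spherical Artin groups are large. Instead it exploits the specific representation $\tilde\sigma$: by Corollary~\ref{cor:kerspherical} the center of $A[\Gamma]$ meets $\ker\tilde\sigma$ nontrivially, so every principal congruence subgroup $A[\Gamma][m]$ contains a fixed nontrivial element; since spherical Artin groups are residually finite (linearity, via Cohen--Wales/Digne/Krammer), the intersection of all finite-index subgroups is trivial, and the two facts are incompatible with the congruence subgroup property. This is short and elementary but is tied to $\tilde\sigma$; it says nothing about other integral representations.

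Your route via the arrangement complement is correct as outlined: the identification $P[\Gamma]\cong\pi_1(M(\Gamma))$ is Brieskorn's theorem, surjectivity of $\pi_1$ under hyperplane deletion is standard (meridians generate), and for three hyperplanes sharing a codimension-two flat the complement splits off a $\mathbb{C}^{n-2}$ factor, leaving three concurrent lines in $\mathbb{C}^2$ whose complement has $\pi_1\cong\mathbb{Z}\times F_2$ (the $\mathbb{C}^*$-scaling contributes the central $\mathbb{Z}$). Your argument therefore yields the stronger conclusion that $A[\Gamma]$ fails CSP for \emph{every} integral representation, at the cost of importing some topology of arrangements that the paper avoids. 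The type-$A$ sanity check via strand-forgetting $P_{n+1}\twoheadrightarrow P_3$ is a good anchor and in fact is exactly the deletion map in that case.
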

\begin{proof}
By Proposition \ref{prop:cspdirect} and Remark \ref{rmk:direct}, it is enough to show that for any irreducible non-affine Coxeter graph $\Gamma$, the group $A[\Gamma]$ do not have congruence subgroup property.

 If $\Gamma$ is spherical and if $A[\Gamma]$ has congruence subgroup property, then every finite index subgroup contains the kernel of $\tilde{\sigma}$ and by Corollary \ref{cor:kerspherical}, the intersection of all finite index subgroups is non-trivial subgroup, which contradicts the fact that $A[\Gamma]$ is a residually finite group  \cite{MR1942303, MR2004479, MR1888796}.
 Let us assume that $\Gamma$ is neither spherical nor Affine type.  In this case, the corresponding Coxeter group $W[\Gamma]$ is an infinite group which is not virtually abelian. By \cite[Theorem II]{MR2700693}, $W[\Gamma]$ has a finite index subgroup $N$ which surjects onto a free group of rank two. The subgroup $\pi^{-1}(N)$ is finite index in $A[\Gamma]$ which surjects onto a non-abelian free group. By Proposition \ref{thm:generalcsp}, it follows that $A[\Gamma]$ do not have congruence subgroup property. This completes the proof of the theorem.
\end{proof}
We note that the only cases excluded in Theorem~\ref{mainthm:artin} are the small Coxeter graphs whose connected components are all of affine types. Here, an irreducible Artin group $A[\Gamma]$ is said to be affine if $\Gamma$ is an irreducible affine Coxeter graphs.

\medskip
\section{Level two principal congruence subgroups}\label{section 6}
In view of Lemma \ref{tits rep integral small}, the  generalised Burau representation of a small Artin group, when evaluating  at $s=1$ and $t=-1$, is integral, and hence it is interesting to explore its (principal) congruence subgroups. Let $A[\Gamma]$ be a small Artin group with $|V(\Gamma)|=n$, where $n \ge 2$. For each  $m \ge 2$, let  $$\tilde{\sigma}_m: A[\Gamma] \to  \GL(n,\mathbb Z_m)$$ be the composition of $\tilde{\sigma}: A[\Gamma]\to  \GL(n,\mathbb Z)$ with the modulo $m$ reduction homomorphism $\GL(n,\mathbb Z) \to \GL(n,\mathbb Z_m)$. Let $A[\Gamma][m]:=\ker(\tilde{\sigma}_m)$ denotes the level $m$ principal congruence subgroup of $A[\Gamma]$. We note that the definition depends on our Coxeter graph $\Gamma$.

\begin{question}
For each $m\geq 2$, the subgroup $A[\Gamma][m]$ is finitely presented. In particular, it admits a finite generating set. Does there exists a finite generating set that can be interpreted in terms of the root system $\Phi[\Gamma]$?
\end{question}
Let $A[\Gamma]$ be a small Artin group with Artin generators $a_1,a_2,\ldots,a_{n}$, where $n \ge 2$. Fixing the ordered basis $\{e_1,e_2,\ldots,e_{n}\}$ for the real vector space $V$, by definition of $\tilde{\sigma}$, the Artin group $A[\Gamma]$ acts on a vector space $V$ with basis $\{e_1, \ldots, e_n\}$ and an bilinear form given by
$$
\langle e_i, e_j \rangle = \begin{cases}
 -2 \cos (\pi / m_{i, j}), & i<j, \\ 0, & i=j, \\ 2 \cos (\pi / m_{i, j}), & i>j .
\end{cases}
$$
where each generator $a_i$ acts on $V$ via
$$
\tilde{\sigma}(a_i)(e_j) = e_j - \langle e_i, e_j \rangle e_i.
$$

Our first observation is the following result.

\begin{proposition}\label{prop:level2}
Let $\Gamma$ be a small Coxeter graph. Then, for each integer $m \ge 2$, we have
$$\langle \langle a_i^m ~\mid~ 1 \le i \le n\rangle \rangle_{A[\Gamma]}~ \unlhd ~A[\Gamma][m]$$
In particular, if $m=2$, then the pure Artin group $PA[\Gamma]$ is contained in $A[\Gamma][2]$.
\end{proposition}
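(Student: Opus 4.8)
The plan is to reduce the containment to a one-line matrix computation that exploits the special shape of the generator images, and then to invoke normality of the congruence subgroup.

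Recall, as in the proof of Proposition~\ref{prop:centerkernel}, that the relation $\tilde\sigma(a_i)(e_j) = e_j - \langle e_i, e_j\rangle e_i$ together with $\langle e_i, e_i\rangle = 0$ forces each $\tilde\sigma(a_i)$ to have all diagonal entries equal to $1$ and its only nonzero off-diagonal entries confined to row $i$. I would record this as
$$
\tilde\sigma(a_i) = I_n + e_i\, v_i^{\mathsf T},
$$
where $e_i$ is the $i$-th standard column vector and $v_i$ is the column vector with $(v_i)_j = -\langle e_i, e_j\rangle$ for $j \neq i$ and $(v_i)_i = 0$.

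The crux is the computation of $\tilde\sigma(a_i)^m$. Since the $i$-th coordinate of $v_i$ vanishes, $v_i^{\mathsf T} e_i = 0$, so the rank-one matrix $e_i v_i^{\mathsf T}$ is nilpotent of square zero: $(e_i v_i^{\mathsf T})^2 = e_i (v_i^{\mathsf T} e_i) v_i^{\mathsf T} = 0$. The binomial expansion therefore collapses to
$$
\tilde\sigma(a_i)^m = (I_n + e_i v_i^{\mathsf T})^m = I_n + m\, e_i v_i^{\mathsf T} \equiv I_n \pmod{m}.
$$
Hence $\tilde\sigma_m(a_i^m) = I_n$, that is, $a_i^m \in A[\Gamma][m]$ for every $i$. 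Since $A[\Gamma][m] = \ker(\tilde\sigma_m)$ is normal in $A[\Gamma]$, being the kernel of a homomorphism, it contains the normal closure of any family of its elements; therefore $\langle\langle a_i^m \mid 1 \le i \le n\rangle\rangle_{A[\Gamma]} \subseteq A[\Gamma][m]$, which is the first assertion.

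For the final clause I would identify the pure Artin group with the normal closure of the squared generators. Adjoining the relations $a_i^2 = 1$ to the Artin presentation~\eqref{Artin presentation} converts each braid relation into the corresponding Coxeter relation $(a_i a_j)^{m_{i,j}} = 1$, so the natural map induces an isomorphism $A[\Gamma]/\langle\langle a_i^2\rangle\rangle \cong W[\Gamma]$ compatible with $\pi$; consequently $PA[\Gamma] = \ker\pi = \langle\langle a_i^2 \mid 1 \le i \le n\rangle\rangle_{A[\Gamma]}$. Taking $m = 2$ in the first part then yields $PA[\Gamma] \subseteq A[\Gamma][2]$. I expect the only step requiring genuine care to be this last presentation-theoretic identification of $PA[\Gamma]$; the nilpotency computation that drives the general case is entirely routine.
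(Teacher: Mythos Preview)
Your proof is correct and follows essentially the same approach as the paper: both reduce to the observation that $\tilde\sigma(a_i)^m(e_j) = e_j - m\langle e_i,e_j\rangle e_i$, which you package cleanly as the nilpotency of the rank-one matrix $e_i v_i^{\mathsf T}$, and both then invoke normality of the kernel and the identification $PA[\Gamma] = \langle\langle a_i^2\rangle\rangle$. Your version is somewhat more explicit on both steps (the paper asserts them without further justification), but the underlying argument is identical.
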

\begin{proof}
When $\Gamma$ is a small Coxeter graph, then for each $1 \le i, j \le n$, $\langle e_i, e_j\rangle$ is an integer. Using the definition of $\tilde{\sigma}$,  we have
$$
\tilde{\sigma}(a_i)^m(e_j)=e_j-m\langle e_i, e_j\rangle e_i. 
$$ 
Reduce modulo $m$, we get 
$$
\tilde{\sigma}(a_i^m)(e_j) = e_j \mod(m) \quad \text { for all } j=1,\ldots, n.
$$
 Thus, the matrix representation of $\tilde{\sigma}$ on $a_i^m$ is the identity matrix when reduced modulo $m$. This implies that the normal closure $\langle \langle a_i^m \mid i=1,\ldots, n \rangle\rangle\subset A[\Gamma][m]$.

In particular, when $m = 2$,  the normal closure of $a_1^2, \ldots, a_n^2$ in $A[\Gamma]$ is the pure artin group $PA[\Gamma]$. Thus, we have $PA[\Gamma] \subset A[\Gamma][2]$.
\end{proof}
\begin{remark}
Under the hypothesis of Theorem~\ref{mainthm:artin}, for some integer $m$, the containment in Proposition~\ref{prop:level2} is a strict containment. Indeed, if $A[\Gamma][m]=\langle \langle a_i^m ~\mid~ 1 \le i \le n\rangle\rangle$ for all $m$, then for any finite index subgroup $H$ of $A[\Gamma]$  and for each $i$, there exist $n_i$ such that $a_i^{n_i} \in H$. Set $n=\prod n_i$. Then $a_i^n \in H$ for each $i$, and $\langle \langle a_i^n ~\mid~ 1 \le i \le n\rangle\rangle$  is a principal congruence subgroup contained in $H$. This contradicts the fact that $A[\Gamma]$ do not have the congruence subgroup property.
\end{remark}

Thus,  we have following commutative diagram:
\begin{equation}\label{phieqn}
\begin{tikzcd}
	{A[\Gamma]} & {{A[\Gamma]/A[\Gamma][2]\subset \GL(n,\mathbb Z_2)}} \\
	{} & {W[\Gamma]}
	\arrow["{\tilde{\sigma}_2}", from=1-1, to=1-2]
	\arrow["\pi"', from=1-1, to=2-2]
	\arrow["\phi"', from=2-2, to=1-2]
\end{tikzcd}
\end{equation}
\begin{lemma}\label{lemma:nonabelian}
If $\Gamma$ is not a right-angled small Coxeter graph, then the quotient $A[\Gamma]/A[\Gamma][2]$ is non-abelian. In particular, $A[\Gamma][2] \neq A[\Gamma]$.
\end{lemma}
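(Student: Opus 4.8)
The plan is to exploit the fact that a small but non-right-angled graph must contain an edge labelled $3$, and to show that the images of the two corresponding Artin generators already fail to commute modulo $2$. Since $A[\Gamma][2]=\ker(\tilde{\sigma}_2)$, the first isomorphism theorem identifies $A[\Gamma]/A[\Gamma][2]$ with the image $\tilde{\sigma}_2(A[\Gamma])\le \GL(n,\mathbb Z_2)$; hence it suffices to produce two generators whose images do not commute in $\GL(n,\mathbb Z_2)$. Because $\Gamma$ is small but not right-angled, there exist $i\neq j$ with $m_{i,j}=3$, and my candidates are $a_i$ and $a_j$.

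First I would record the arithmetic that drives everything: for a small graph the only way to get an odd pairing is an edge labelled $3$. Concretely $\langle e_i,e_k\rangle=\pm 2\cos(\pi/m_{i,k})$ equals $\pm 1$ when $m_{i,k}=3$, and equals $0$ (when $m_{i,k}=2$) or $\pm 2\equiv 0$ (when $m_{i,k}=\infty$) in all other cases, so $\langle e_i,e_k\rangle\equiv 1 \pmod 2$ if and only if $m_{i,k}=3$. Using the description of the matrices in the proof of Proposition~\ref{prop:centerkernel}, each $\tilde{\sigma}_2(a_i)$ has the form $I+R_i$, where $R_i$ is supported off the diagonal in the $i$-th row, with $(i,k)$-entry equal to $\langle e_i,e_k\rangle \bmod 2$. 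In particular the $(i,j)$-entry of $\tilde{\sigma}_2(a_i)$ and the $(j,i)$-entry of $\tilde{\sigma}_2(a_j)$ are both $1$.

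Writing $X=I+R_i$ and $Y=I+R_j$, the commutator difference collapses to $XY-YX=R_iR_j-R_jR_i$. The main point is then a support computation: since $R_i$ and $R_j$ live entirely in rows $i$ and $j$ respectively, the product $R_iR_j$ is supported in row $i$ and $R_jR_i$ in row $j$, so the $(i,i)$-entry of $XY-YX$ is just the $(i,i)$-entry of $R_iR_j$, namely $(R_i)_{i,j}(R_j)_{j,i}=1\cdot 1=1\neq 0$ in $\mathbb Z_2$. Hence $\tilde{\sigma}_2(a_i)$ and $\tilde{\sigma}_2(a_j)$ do not commute, the group $A[\Gamma]/A[\Gamma][2]$ is non-abelian, and in particular it is non-trivial, so $A[\Gamma][2]\neq A[\Gamma]$.

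I expect the only real subtlety to be a temptation to argue inside the two-dimensional span of $e_i$ and $e_j$: that subspace is not $\tilde{\sigma}(a_i)$-invariant once $n>2$, and a priori contributions from the other coordinates could cancel the commutator. What saves the argument is precisely the row-support shape of the integral generalised Burau matrices, which localises the only relevant product into position $(i,i)$ and prevents any interference from the remaining generators; so the heart of the write-up is the bookkeeping in the previous paragraph rather than any deeper input.
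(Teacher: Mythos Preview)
Your argument is correct and follows essentially the same approach as the paper: both pick an edge with $m_{i,j}=3$ and verify by direct computation that $\tilde{\sigma}_2(a_i)$ and $\tilde{\sigma}_2(a_j)$ fail to commute. The paper simply evaluates $\tilde{\sigma}(a_1a_2)$ and $\tilde{\sigma}(a_2a_1)$ on $e_2$ and observes the difference is $e_2\not\equiv 0\pmod 2$, whereas you package the same computation as $XY-YX=R_iR_j-R_jR_i$ and read off the $(i,i)$-entry; these are two ways of inspecting the same nonzero entry of the commutator matrix.
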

\begin{proof}
Without loss of generality, we can assume that $m_{1,2}=3$. It is sufficient to show that $\tilde{\sigma}(a_1a_2)(e_2)\neq \tilde{\sigma}(a_2a_1)(e_2) \mod(2)$. Indeed, we have: 
\begin{align*}
\tilde{\sigma}(a_1a_2)(e_2)&=\tilde{\sigma}(a_1)(e_2)=e_2 + e_1\\
\tilde{\sigma}(a_2a_1)(e_2)&=\tilde{\sigma}(a_2)(e_2+e_1)=e_2+\tilde{\sigma}(a_2)(e_1)\\
&=e_2 + (e_1 - e_2) =e_1
\end{align*}
Thus, when reduced modulo $2$, we see that $\tilde{\sigma}_2(a_1a_2)\neq \tilde{\sigma}_2(a_2a_1)$. Consequently, the image of $a_1$ and $a_2$ in the quotient $A[\Gamma]/A[\Gamma][2]$ do not commutes. This completes the proof.
\end{proof}
\begin{remark}\label{rmknormalDn}
The classification of normal subgroups of finite and affine Coxeter groups is well understood (see~\cite{MR1490241}). In order to understand the normal subgroups of Coxeter group of type~$D_n $ for $n \geq 4$, we consider the following group homomorphisms: 
\begin{enumerate}
\item $\psi:W[D_n] \to W[A_{n-1}]\cong S_n$ defined by $s_{i} \mapsto \tau_i=(i, i+1)$ for $ 1 \leq i \leq n-1$ and $s_n \mapsto \tau_{n-1}$, 
\item $\psi_0:W[D_4]\to S_3$  defined by $s_1, s_3, s_4 \mapsto \tau_1=(1,2)$ and $s_2 \mapsto \tau_2=(2,3)$,
\item $\psi_1:W[D_4]\to S_4$ defined by $s_1, s_3, \mapsto \tau_1=(1,2)$, $s_2 \mapsto \tau_2=(2,3)$ and $s_4 \mapsto \tau_3=(3,4)$,
\item $\psi_2:W[D_4]\to S_4$ defined by $s_1 \mapsto \tau_1=(1,2)$, $s_2 \mapsto \tau_2=(2,3)$ and  $s_3,s_4 \mapsto \tau_3=(3,4)$, and
\item $\psi_3:W[D_4]\to S_4$ defined by $s_1, s_4 \mapsto \tau_1=(1,2)$, $s_2 \mapsto \tau_2=(2,3)$ and  $s_3 \mapsto \tau_3=(3,4)$.
\end{enumerate}
 The non-trivial normal subgroups of the Coxeter group $W[D_n]$ are the commutator subgroup $W[D_n]'$, the kernel of the homomorphism $\psi$, and the center $Z(W[D_n])$, when $n$ is even.  Additionally, for $n=4$, the subgroup $\ker(\psi_0)$, $\ker(\psi_1)$, $\ker(\psi_2)$, and $\ker(\psi_3)$ are also normal subgroup of $W[D_4]$. By \cite{MR1490241}, these are the only non-trivial normal subgroups of $W[D_n]$. Therefore, if $N$ be a normal subgroup of $W[D_n]$, then 
 \begin{enumerate}[(i)]
\item for $n \geq 5$, if $s_{n-1}s_{n} \not \in N$, then $N$ must be either $\{1\}$ or $Z(W[D_n])$. In particular, if $n$ is odd, then $N=\{1\}$. 
\item for  $n=4$, if $s_1s_3, s_3s_4,$ and $s_1s_4$ are not elements of $N$, then $N$ must be either $\{1\}$ or $Z(W[D_4])$.
\end{enumerate}
\end{remark}

\medskip

\begin{theorem}\label{thm:level2}
If $\Gamma$ is a spherical Coxeter graph, then $A[\Gamma][2]=\ker(A[\Gamma]\to W[\Gamma]/Z(W[\Gamma]))$.
\end{theorem}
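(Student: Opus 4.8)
The plan is to reduce the statement to a purely Coxeter-theoretic assertion about the Tits representation modulo $2$, and then settle it type by type. By Proposition~\ref{prop:level2} applied with $m=2$ we have $PA[\Gamma]=\ker(\pi)\subseteq A[\Gamma][2]=\ker(\tilde{\sigma}_2)$, so $\tilde{\sigma}_2$ descends to the homomorphism $\phi\colon W[\Gamma]\to\GL(n,\mathbb Z_2)$ of diagram~\eqref{phieqn}. Consequently $A[\Gamma][2]=\pi^{-1}(\ker\phi)$, while $\ker\big(A[\Gamma]\to W[\Gamma]/Z(W[\Gamma])\big)=\pi^{-1}(Z(W[\Gamma]))$, and since $\pi$ is surjective it suffices to prove $\ker\phi=Z(W[\Gamma])$. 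The first thing I would record is that $\phi$ is exactly the reduction modulo $2$ of the integral Tits representation $\rho$ of $W[\Gamma]$ (the graph being small by Lemma~\ref{tits rep integral small}, since $A[\Gamma][2]$ is defined through the integral $\tilde{\sigma}$). Indeed, comparing the matrix of $\tilde{\sigma}(a_i)$ with $\rho(s_i)$ as in Remark~\ref{rmk:images_jtits}, the two differ only in the $(i,i)$-entry ($1$ versus $-1$) and in the signs of the off-diagonal entries of the $i$-th row; all these discrepancies collapse modulo $2$ because $-1\equiv 1$. Hence $\phi=\bar{\rho}$, the mod $2$ Tits representation, and the task becomes to show $\ker\bar{\rho}=Z(W[\Gamma])$.

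Next I would reduce to the connected case. By Remark~\ref{rmk:direct} a disconnected $\Gamma$ splits $A[\Gamma]$, $W[\Gamma]$ and $\bar{\rho}$ as direct products, respectively direct sums, over the connected components of $\Gamma$; since both $Z(-)$ and the kernel in question distribute over direct products, it is enough to treat irreducible $\Gamma$. A connected small graph of spherical type has all labels in $\{2,3\}$, i.e.\ is simply laced of finite type, hence is one of $A_n,D_n,E_6,E_7,E_8$. For these, the containment $Z(W[\Gamma])\subseteq\ker\bar{\rho}$ is immediate: a nontrivial central element of an irreducible finite Coxeter group is the longest element $w_0$, which acts as $-I$ under $\rho$, and $-I\equiv I\pmod 2$.

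The substance is the reverse inclusion $\ker\bar{\rho}\subseteq Z(W[\Gamma])$, which I would establish per type, using that $\ker\bar{\rho}$ is a normal subgroup of $W[\Gamma]$. For $A_n$ with $n\ge 2$, the module $Q\otimes\mathbb F_2$ is identified with $\{x\in\mathbb F_2^{n+1}\mid\sum x_i=0\}$, on which $S_{n+1}$ acts by permuting coordinates; a direct check shows that a permutation fixing every $e_i+e_j$ must be the identity, so $\bar{\rho}$ is faithful and $\ker\bar{\rho}=\{1\}=Z(W[A_n])$. For $D_n$ I would invoke the classification recalled in Remark~\ref{rmknormalDn}: since $m_{n-2,n-1}=m_{n-2,n}=3$ and $m_{n-1,n}=2$, one computes $\bar{\rho}(s_{n-1}s_n)(e_{n-2})=e_{n-2}+e_{n-1}+e_n\ne e_{n-2}$, so $s_{n-1}s_n\notin\ker\bar{\rho}$ (and for $D_4$ the analogous computation rules out the three commuting pairs $s_1s_3,\ s_3s_4,\ s_1s_4$); by Remark~\ref{rmknormalDn} the normal subgroup $\ker\bar{\rho}$ is then forced to be $\{1\}$ or $Z(W[D_n])$, and combined with $Z(W[D_n])\subseteq\ker\bar{\rho}$ this pins it to $Z(W[D_n])$. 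For $E_6,E_7,E_8$ the normal subgroup lattices are very short, being governed by a single simple composition factor ($W(E_6)$ is an index-two extension of the simple group $\mathrm{PSU}_4(2)$; $W(E_7)\cong\mathbb Z_2\times\mathrm{Sp}_6(2)$; and $W(E_8)/Z\cong O_8^+(2)$), so a SageMath verification that $\bar{\rho}$ is nontrivial on the commutator subgroup (for instance $\bar{\rho}(s_1s_2)\ne I$ with $s_1s_2\in W[\Gamma]'$) eliminates every normal subgroup except the center.

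The main obstacle I anticipate is precisely this reverse inclusion for the exceptional types: there is no slick uniform argument, and one must rely on the explicit normal-subgroup classification of \cite{MR1490241} together with the identification of $W(E_6),W(E_7),W(E_8)$ with classical groups over $\mathbb F_2$, supplemented by a finite computation confirming that $\bar{\rho}$ does not vanish on the small normal subgroups. Elsewhere the only conceptual care needed is in the identification $\phi=\bar{\rho}$ and in the well-definedness of $\phi$, both of which follow cleanly once one observes the collapse of signs modulo $2$ and the inclusion $PA[\Gamma]\subseteq A[\Gamma][2]$.
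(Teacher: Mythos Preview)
Your proof is correct and follows essentially the same strategy as the paper: reduce via $PA[\Gamma]\subseteq A[\Gamma][2]$ to determining the normal subgroup $\ker\phi\trianglelefteq W[\Gamma]$, then settle the simply-laced irreducible types case by case using the normal-subgroup classification of \cite{MR1490241}. Your execution is slightly more explicit in two places---the identification $\phi=\bar\rho$ (which the paper records only after the proof) and the direct faithfulness argument for $A_n$ via the permutation module over $\mathbb F_2$---whereas the paper instead invokes Lemma~\ref{lemma:nonabelian} and Corollary~\ref{cor:kerspherical} to obtain non-abelianness of the quotient and the containment $Z(W[\Gamma])\subseteq\ker\phi$, respectively.
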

\begin{proof}
By Proposition~\ref{prop:level2} and Lemma~\ref{lemma:nonabelian}, the group $A[\Gamma]/A[\Gamma][2]$ is a finite non-abelian quotient of $W[\Gamma]$.

 When $\Gamma$ is of type $A_n$ or $E_6$, the smallest non-abelian quotient of $W[\Gamma]$ is $W[\Gamma]$ itself. Thus, we have $A[\Gamma][2]=PA[\Gamma]$ when $\Gamma=A_n$ or $E_6$. 

When $\Gamma=E_7$ or $E_8$, the non-abelian quotient of $W[\Gamma]$ is either $W[\Gamma]$ or $W[\Gamma]/Z(W[\Gamma])$. By Corollary \ref{cor:kerspherical}, a lift (the Garside element) of the longest element of $W[\Gamma]$  lies in the kernel of $\tilde{\sigma}$.  Thus, we have $A[\Gamma]/A[\Gamma][2]\cong W[\Gamma]/Z(W[\Gamma])$. This shows that $A[\Gamma][2]=\ker(A[\Gamma]\to W[\Gamma]/Z(W[\Gamma])$ when $\Gamma=E_7$ or $E_8$.

 When $\Gamma=D_n$, we have the following cases based on the parity of $n$:
\par
Case 1: When $n$ is odd.
The non-abelian quotient of $W[D_n]$ is $W[D_n]$ or $W[A_{n-1}]$. In the latter case, the quotient map is $\psi:W[D_n] \to W[{A_{n-1}}]\cong S_n$ defined by $s_{i} \mapsto \tau_i=(i, i+1)$ for $ 1 \leq i \leq n-1$ and $s_n \mapsto \tau_{n-1}$. A simple matrix computation shows that $\phi(s_{n-1}s_{n}) \neq I_n \mod 2$, where $\phi$ is given in \eqref{phieqn}. Thus, $A[\Gamma][2]=PA[\Gamma]$.
\par
Case 2: When $n$ is even.
The non-abelian quotient of $W[D_n]$ is $W[D_n]$ or $W[D_n]/Z(W[D_n])$ or $W[A_{n-1}]$. Additional, when $n=4$, the symmetric group $S_3$ is  also quotient of a $W[D_4]$. A simple matrix computation and by Corollary \ref{cor:kerspherical}, we can conclude that $A[D_n]/A[D_n][2]\cong W[D_n]/Z(W[D_n])$.  This completes the proof of the theorem.
\end{proof}

The map $\phi:W[\Gamma]\to \GL(n, \mathbb Z_2)$ given in \eqref{phieqn} agrees with the composition of the Tits representation of $W[\Gamma]$ with the modulo two reduction map. Thus, we have  $\ker(A[\Gamma]\to W[\Gamma]/W[\Gamma][2])\subset A[\Gamma][2]$.

\begin{conjecture}
If $\Gamma$ is a non-spherical small Coxeter graph, then $A[\Gamma][2]=\ker(A[\Gamma]\to W[\Gamma]/W[\Gamma][2])$.
\end{conjecture}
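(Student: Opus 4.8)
The plan is to upgrade the inclusion $\ker(A[\Gamma]\to W[\Gamma]/W[\Gamma][2])\subseteq A[\Gamma][2]$ recorded immediately above into an equality, and I would carry this out uniformly in $\Gamma$, using nothing special about the non-spherical case. Writing $\rho_2$ for the reduction modulo $2$ of the Tits representation $\rho$ of $W[\Gamma]$, so that $W[\Gamma][2]=\ker\rho_2$ by definition, the engine of the proof is the fact—asserted in the line preceding the statement—that the induced map $\phi$ of diagram~\eqref{phieqn} is honestly equal to $\rho_2$. Once this is an equality of homomorphisms, both inclusions fall out together.

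First I would invoke Proposition~\ref{prop:level2} with $m=2$ to get $PA[\Gamma]=\ker\pi\subseteq\ker\tilde\sigma_2=A[\Gamma][2]$, so that $\tilde\sigma_2$ descends through $\pi$ to $\phi\colon W[\Gamma]\to\GL(n,\mathbb Z_2)$, with $\tilde\sigma_2=\phi\circ\pi$. The substantive step is the entrywise congruence $\tilde\sigma(a_i)\equiv\rho(w_i)\pmod 2$ for every generator. Both matrices equal the identity outside the $i$-th row; in that row $\tilde\sigma(a_i)$ has diagonal entry $1$ and off-diagonal entries $-\langle e_i,e_j\rangle$, while $\rho(w_i)$ has diagonal entry $-1$ and off-diagonal entries $-2B(e_i,e_j)=2\cos(\pi/m_{i,j})$. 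The diagonals agree modulo $2$ because $1\equiv-1$, and the off-diagonals agree because $\langle e_i,e_j\rangle=\pm 2\cos(\pi/m_{i,j})$ differs from $2\cos(\pi/m_{i,j})$ by a multiple of $2$ (for $i>j$ the discrepancy is $4\cos(\pi/m_{i,j})$, which is even for each $m_{i,j}\in\{2,3,\infty\}$). Since two homomorphisms that agree on a generating set coincide, this yields $\phi=\rho_2$, whence $\ker\phi=W[\Gamma][2]$.

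The remainder is a diagram chase: from $\tilde\sigma_2=\phi\circ\pi$ and $\ker\phi=W[\Gamma][2]$ I would read off
$$
A[\Gamma][2]=\ker\tilde\sigma_2=\pi^{-1}(\ker\phi)=\pi^{-1}\!\big(W[\Gamma][2]\big)=\ker\big(A[\Gamma]\to W[\Gamma]/W[\Gamma][2]\big),
$$
the new content being the reverse inclusion $A[\Gamma][2]\subseteq\pi^{-1}(W[\Gamma][2])$. I expect the only delicate point to be the mod-$2$ matrix identity $\phi=\rho_2$, and in particular the discipline of verifying it simultaneously for all small graphs, including the infinite non-spherical ones; everything after it is formal. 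The statement is natural to pose as open because, unlike the spherical situation of Theorem~\ref{thm:level2}—where the classification of normal subgroups pins $W[\Gamma][2]$ down as $Z(W[\Gamma])$—there is no comparable structural description of $W[\Gamma][2]$ when $W[\Gamma]$ is infinite. The virtue of the argument above is precisely that it never needs one: the coincidence of these two subgroups of $A[\Gamma]$ follows from the single factorisation $\tilde\sigma_2=\rho_2\circ\pi$.
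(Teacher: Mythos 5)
The statement you are addressing is left as a \emph{conjecture} in the paper: the text preceding it establishes only the inclusion $\ker(A[\Gamma]\to W[\Gamma]/W[\Gamma][2])\subseteq A[\Gamma][2]$, and there is no proof of the reverse inclusion to compare against. Your argument appears to be correct and complete, and the computation at its core checks out. For a small Coxeter graph the $i$-th row of $\tilde{\sigma}(a_i)$ has diagonal entry $1$ and off-diagonal entries $-\langle e_i,e_j\rangle=\pm 2\cos(\pi/m_{i,j})\in\{0,\pm1,\pm2\}$, while the $i$-th row of $\rho(w_i)$ has diagonal entry $-1$ and off-diagonal entries $2\cos(\pi/m_{i,j})$; the discrepancies are $2$ on the diagonal and either $0$ or $4\cos(\pi/m_{i,j})\in\{0,2,4\}$ off the diagonal, all even. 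Hence $\tilde{\sigma}_2$ and $\rho_2\circ\pi$ agree on the Artin generators and therefore coincide, which gives $A[\Gamma][2]=\ker(\rho_2\circ\pi)=\pi^{-1}(\ker\rho_2)=\pi^{-1}(W[\Gamma][2])$. The paper itself asserts that $\phi$ agrees with $\rho_2$ in the sentence before the conjecture, but then only exploits the containment $W[\Gamma][2]\subseteq\ker\phi$; your observation that an actual equality of homomorphisms forces $\ker\phi=W[\Gamma][2]$ on the nose is precisely what closes the gap, and it requires no structural knowledge of the normal subgroups of the infinite group $W[\Gamma]$ (the unavailability of which is presumably why the spherical-case strategy of Theorem~\ref{thm:level2} did not extend).

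Two further remarks. First, your proof nowhere uses non-sphericity, so it yields $A[\Gamma][2]=\pi^{-1}(W[\Gamma][2])$ for \emph{every} small Coxeter graph; this is consistent with Theorem~\ref{thm:level2}, which in this light reduces to the identity $W[\Gamma][2]=Z(W[\Gamma])$ for spherical small $\Gamma$ (true, by the normal-subgroup analysis carried out there). Second, the only point deserving a double check is that the paper's $W[\Gamma][2]$ is defined with respect to exactly the Tits representation in the stated basis, with no conjugation ambiguity between the form used in Section~\ref{section 4} and the specialisation $s=1$, $t=-1$ of Squier's form; it is, and in any case the asymmetry between $K_{ij}$ and $K_{ji}$ disappears modulo $2$. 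Given how short the argument is, one should be suspicious that the author had a reason for caution, but I cannot locate one: every step is a formal consequence of the entrywise congruence $\tilde{\sigma}(a_i)\equiv\rho(w_i)\pmod 2$, which you verify correctly.
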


Quotients of congruence subgroups of braid groups have been studied recently in \cite{quotientbraid, MR4811535}.

\begin{proposition}
Let $\Gamma$ be a Coxeter graph and let $k$ be an odd integer. Then 
$$A[\Gamma][k]/A[\Gamma][2k]\cong A[\Gamma]/A[\Gamma][2].$$
\end{proposition}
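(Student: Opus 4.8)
The plan is to identify both quotients with a common group via the second isomorphism theorem, reducing everything to a single product‑of‑subgroups computation. First I would record the two structural facts that drive the argument. Since $k$ is odd we have $\gcd(2,k)=1$, so $2k=\operatorname{lcm}(2,k)$, and an integral matrix is congruent to the identity modulo $2k$ if and only if it is congruent modulo $2$ and modulo $k$ separately; applied to $\tilde\sigma(g)$ this gives
$$A[\Gamma][2k]=A[\Gamma][2]\cap A[\Gamma][k].$$
Next, both $A[\Gamma][2]$ and $A[\Gamma][k]$ are kernels of homomorphisms, hence normal in $A[\Gamma]$, so their product $A[\Gamma][2]\cdot A[\Gamma][k]$ is again a subgroup. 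Granting momentarily that this product is all of $A[\Gamma]$, the second isomorphism theorem yields
$$\frac{A[\Gamma]}{A[\Gamma][2]}=\frac{A[\Gamma][2]\cdot A[\Gamma][k]}{A[\Gamma][2]}\cong\frac{A[\Gamma][k]}{A[\Gamma][k]\cap A[\Gamma][2]}=\frac{A[\Gamma][k]}{A[\Gamma][2k]},$$
which is exactly the claimed isomorphism (and the isomorphism is the evident one induced by inclusion $A[\Gamma][k]\hookrightarrow A[\Gamma]$).

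The remaining point, and really the only content, is to verify $A[\Gamma][2]\cdot A[\Gamma][k]=A[\Gamma]$. As this product is a subgroup it suffices to show it contains each Artin generator $a_i$. Here I would exploit that $k$ is odd: by Bézout choose integers $u,v$ with $2u+kv=1$, and write $a_i=a_i^{2u+kv}=(a_i^2)^u(a_i^k)^v$, where the two factors commute since both are powers of $a_i$. By Proposition~\ref{prop:level2} we have $a_i^2\in A[\Gamma][2]$ and $a_i^k\in A[\Gamma][k]$, and since these are subgroups it follows that $(a_i^2)^u\in A[\Gamma][2]$ and $(a_i^k)^v\in A[\Gamma][k]$. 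Hence $a_i\in A[\Gamma][2]\cdot A[\Gamma][k]$ for every $i$, and as $a_1,\dots,a_n$ generate $A[\Gamma]$ the product is the whole group.

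I do not anticipate a genuine obstacle: once the divisibility bookkeeping is set up the argument is purely formal. The one place that merits care is the identification $A[\Gamma][2k]=A[\Gamma][2]\cap A[\Gamma][k]$, which rests on the coprimality $\gcd(2,k)=1$ (equivalently the Chinese Remainder isomorphism $\mathbb Z_{2k}\cong\mathbb Z_2\times\mathbb Z_k$) and would break down for even $k$. This is precisely where the hypothesis that $k$ is odd is used, both in this intersection identity and in the Bézout step that expresses each generator as a commuting product of a level‑$2$ and a level‑$k$ element.
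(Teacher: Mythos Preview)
Your argument is correct and is essentially the paper's proof, just repackaged via the second isomorphism theorem instead of the first: the paper restricts the quotient map $q:A[\Gamma]\to A[\Gamma]/A[\Gamma][2]$ to $A[\Gamma][k]$, observes it is surjective because $q(a_i^k)=q(a_i)$ (as $a_i^{k-1}\in A[\Gamma][2]$ when $k$ is odd), and identifies the kernel as $A[\Gamma][2]\cap A[\Gamma][k]=A[\Gamma][2k]$. Your surjectivity step ($A[\Gamma][2]\cdot A[\Gamma][k]=A[\Gamma]$ via B\'ezout) and the paper's ($q(a_i^k)=q(a_i)$) are the same observation, and both rest on Proposition~\ref{prop:level2} and the coprimality of $2$ and $k$.
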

\begin{proof}
Let $a_1, \ldots, a_n$ be the Artin generators.  By Proposition~\ref{prop:level2}, we have $a^l \in A[\Gamma][l]$ for all $l \geq 2$. Let $q:A[\Gamma]\to A[\Gamma]/A[\Gamma][2]$ be the quotient map.  When restricted to $A[\Gamma][k]$, the map $q$ is surjective because $q(a_i^k)=a_i \in A[\Gamma]/A[\Gamma][2]$.
 Since $k$ is odd, the kernel of this restricted homomorphism is $A[\Gamma][2]\cap A[\Gamma][k]=A[\Gamma][2k]$.Therefore, by the first isomorphism theorem, the proposition follows.
\end{proof}
\medskip

\section{Level four principal congruence subgroups}\label{section 7}
Let $A= \langle S \mid R \rangle$ be a small Artin group with $S=\{a_1,a_2,\ldots,a_{n}\}$, where $n \ge 2$. Fixing the ordered basis $\{e_1,e_2,\ldots,e_{n}\}$ for the real vector space $V$, by Lemma \ref{tits rep integral small}, evaluating at $s=1$ and $t=-1$, we obtain the generalised integral Burau representation as the matrix representation $\tilde{\sigma}:A \to \GL(n,\mathbb Z)$. Let $\Gamma$ be a right-angled Coxeter graph, by definition of $\tilde{\sigma}$, the Artin group $A[\Gamma]$ acts on a vector space $V$ with basis $\{e_1, \ldots, e_n\}$ and an bilinear form given by
$$
\langle e_i, e_j \rangle = \begin{cases}
-2 & \text {if } m_{i,j}=\infty \text{ and } i<j, \\
2 & \text {if } m_{i,j}=\infty \text{ and } i>j,\\
0 & \text {if } m_{i,j}=2 \text{ or } i=j.
\end{cases}
$$
where each generator $a_i$ acts on $V$ via
$$
\tilde{\sigma}(a_i)(e_j) = e_j - \langle e_i, e_j \rangle e_i.
$$

\begin{proposition}\label{prop:level4raag}
Let $A[\Gamma]$ be the right-angled Artin group associated with a right-angled Coxeter graph $\Gamma$. Then, for each $m \ge 1$, 
$$\langle \langle \left(a_i\right)^m ~\mid~ 1 \le i \le n\rangle \rangle_{A[\Gamma]}~ \unlhd ~A[\Gamma][2m].$$ 
In particular, if $m=1$, then $A[\Gamma][2]=A[\Gamma]$.
\end{proposition}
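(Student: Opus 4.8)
The plan is to adapt the computation from the proof of Proposition~\ref{prop:level2}, exploiting the extra divisibility that the right-angled hypothesis imposes on the bilinear form. The crucial observation is that when $\Gamma$ is right-angled, every value $\langle e_i, e_j\rangle$ lies in $\{0, \pm 2\}$, and so is always divisible by $2$; this is precisely what upgrades the level from $m$ to $2m$.

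First I would establish, by induction on $m$, the closed formula
$$
\tilde{\sigma}(a_i)^m(e_j) = e_j - m\langle e_i, e_j\rangle e_i.
$$
The base case $m=1$ is the definition of $\tilde{\sigma}(a_i)$. For the inductive step, note that since $\langle e_i, e_i\rangle = 0$ the vector $e_i$ is fixed by $\tilde{\sigma}(a_i)$, so applying $\tilde{\sigma}(a_i)$ to $e_j - m\langle e_i, e_j\rangle e_i$ subtracts one further copy of $\langle e_i, e_j\rangle e_i$, yielding $e_j - (m+1)\langle e_i, e_j\rangle e_i$.

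Next, using the right-angled hypothesis, I would write $\langle e_i, e_j\rangle = 2\epsilon_{ij}$ with $\epsilon_{ij}\in \{0, \pm 1\}$. Then $\tilde{\sigma}(a_i)^m(e_j) = e_j - 2m\,\epsilon_{ij}\, e_i \equiv e_j \pmod{2m}$ for every $j$. Hence $\tilde{\sigma}(a_i^m)$ reduces to the identity matrix modulo $2m$, i.e.\ $a_i^m \in A[\Gamma][2m]$. Since $A[\Gamma][2m] = \ker(\tilde{\sigma}_{2m})$ is normal in $A[\Gamma]$, it contains the normal closure $\langle\langle a_i^m \mid 1 \le i \le n\rangle\rangle_{A[\Gamma]}$, which gives the asserted containment.

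Finally, for the special case $m = 1$, the same computation (with $m=1$) yields $\tilde{\sigma}(a_i)(e_j) \equiv e_j \pmod 2$ for all $i, j$, so each generator $a_i$ maps to the identity under $\tilde{\sigma}_2$. As the $a_i$ generate $A[\Gamma]$ and $\tilde{\sigma}_2$ is a homomorphism, the whole group lies in $\ker(\tilde{\sigma}_2) = A[\Gamma][2]$, giving $A[\Gamma][2] = A[\Gamma]$. I do not anticipate a genuine obstacle here: the only point requiring care is the inductive formula for $\tilde{\sigma}(a_i)^m$, and the entire argument hinges on the single structural fact that the right-angled form takes only even values.
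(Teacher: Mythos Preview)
Your proof is correct and follows essentially the same approach as the paper: compute $\tilde{\sigma}(a_i)^m(e_j)=e_j-m\langle e_i,e_j\rangle e_i$, use that in the right-angled case $\langle e_i,e_j\rangle$ is even to conclude $a_i^m\in A[\Gamma][2m]$, and then pass to the normal closure. The paper states the formula without the inductive justification and leaves the evenness of $\langle e_i,e_j\rangle$ implicit, but otherwise the arguments coincide.
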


\begin{proof}
It is straightforward to observe that $$(\tilde{\sigma}(a_i^m))(e_j)=e_j-m\left\langle e_i, e_j\right\rangle e_i.$$
Reducing modulo $2m$, We have
$$
(\tilde{\sigma}(a_i^m))(e_j)=e_j \mod (2m)
$$
 Thus, the matrix representation of $\tilde{\sigma}$ on $a_i^m$ is the identity matrix when reduced modulo $2m$. This implies that the normal closure $\langle \langle a_i^m \mid i=1,\ldots, n \rangle\rangle\subset A[\Gamma][2m]$.

In particular, when $m = 1$, we have  $A[\Gamma][2]=A[\Gamma]$. This completes the proof of the proposition.
\end{proof}

If $\Gamma$ is a right-angled Coxeter graphs, then by taking $m=2$ in Proposition~\ref{prop:level4raag},  we have following commutative diagram:
$$\begin{tikzcd}
	{A[\Gamma]} & {{A[\Gamma]/A[\Gamma][4]\subset \GL(n,\mathbb Z_4)}} \\
	{} & {W[\Gamma]}
	\arrow["{\tilde{\sigma}_4}", from=1-1, to=1-2]
	\arrow["\pi"', from=1-1, to=2-2]
	\arrow["\phi"', from=2-2, to=1-2]
\end{tikzcd}
$$

 Let $(a_{i,j})$ denotes the matrix representing of $\tilde{\sigma}(a_k)$ for $1 \leq k \leq n$, then one can see that
\begin{equation}\label{small coxeter matrix}
a_{i,j}=\begin{cases}
\delta_{i,j} & \text{ if } i\neq k,\\
\alpha(k,j) & \text{ if } i = k,
\end{cases}
\end{equation}
where $\alpha(k,k)=1$ and  if $k < j$ then $$\alpha(k,j)=\begin{cases}
1 & \text{ if } m_{k,j} = 3, \\
0 & \text{ if } m_{k,j} = 2, \\
2 & \text{ if } m_{k,j} = \infty, \\
\end{cases} 
$$ and if $k>j$, then 
$$\alpha(k,j)=\begin{cases}
-1 & \text{ if } m_{k,j} = 3, \\
0 & \text{ if } m_{k,j} = 2, \\
-2 & \text{ if } m_{k,j} = \infty, \\
\end{cases} 
$$
Now, if $1 \le k\neq \ell \le n$ and $(c_{i,j})$ denotes the matrix representing of $\tilde{\sigma}(a_ka_l)$, then 
\begin{equation}\label{cij}
c_{i,j}=\begin{cases}
    \delta_{i,j} & \text{ if } i\neq k,\quad i\neq \ell,\\
       \alpha(\ell,j) & \text{ if }  i=\ell,\quad j\neq \ell,\\
        1 & \text{ if } i=\ell,\quad j=\ell,\\
        	\alpha(k,j)+ \alpha(\ell,j)\alpha(k,\ell)    & \text{ if } i=k, \quad j\neq k, \ell\\
    \alpha(k,\ell) & \text{ if } i=k,\quad j=\ell,\\
    	1-\alpha(k,\ell)^2    & \text{ if } i=k, \quad j= k,
\end{cases}
\end{equation}
and if $1 \le k\neq \ell \le n$ and $(c_{i,j}')$ denotes the matrix representing of $\tilde{\sigma}(a_k^{-1}a_l^{-1})$, then 
\begin{equation}\label{cij'}
c_{i,j}'=\begin{cases}
    \delta_{i,j} & \text{ if } i\neq k,\quad i\neq \ell,\\
    -\alpha(\ell,j) & \text{ if }  i=\ell,\quad j\neq \ell,\\
        1 & \text{ if } i=\ell,\quad j=\ell,\\
        	-\alpha(k,j)+ \alpha(\ell,j)\alpha(k,\ell)    & \text{ if } i=k, \quad j\neq k, \ell,\\
    -\alpha(k,\ell) & \text{ if } i=k,\quad j=\ell,\\
		1-\alpha(k,\ell)^2    & \text{ if } i=k, \quad j= k,
\end{cases}
\end{equation}
where $\alpha(k,j)$ is defined in equation \eqref{small coxeter matrix}. Further, note that
$$c_{k,k}=c_{k,k}'=1+\alpha(\ell,k)\alpha(k,\ell)=1-\alpha(k,\ell)^2,\quad c_{\ell,\ell}=c_{\ell,\ell}'=1,\quad \textrm{and} \quad c_{k,\ell}=\alpha(k,\ell)=-\alpha(\ell,k)=- c_{\ell,k}.$$

For $j\neq \ell$, we set $$\gamma_j :=\begin{cases}
        1-\alpha(k,\ell)^2  & \text{ if } j= k,  \\
       	\alpha(k,j)- \alpha(\ell,j)\alpha(k,\ell) & \text{ if } j\neq k.
    \end{cases}$$

\begin{lemma}\label{prod matrix square}
Let $1 \le k\neq \ell \le n$  and $(d_{i,j})$ be the matrix representing of $\tilde{\sigma}(a_ka_la_k^{-1}a_l^{-1})$. Then 
$$d_{i,j}=\delta_{i,j} \quad \textrm{if} \quad i\neq k,\ell, $$
$$
d_{k,j}=\begin{cases}
      \alpha(k,\ell)^4-\alpha(k,\ell)^2+1 & \text{ if } j=k,\\
      \alpha(k,\ell)^3 & \text{ if }  j=\ell,\\
	\gamma_j\alpha(k,\ell)^2+\alpha(k,\ell)\alpha(\ell,j)  & \text{ if }  j\neq k ,\ell,
\end{cases}
\quad \textrm{and} \quad 
d_{\ell,j}=\begin{cases}
        \alpha(k,\ell)^3  &\text { if } j=k,\\
		\alpha(k,\ell)^2 + 1  &\text { if } j=\ell, \\
		\gamma_j\alpha(k,\ell)	 &\text { if } j\neq k, \ell.
      \end{cases}
$$

\end{lemma}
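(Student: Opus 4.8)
The plan is to exploit the fact that $\tilde{\sigma}$ is a genuine group homomorphism, so that the commutator word factors as $a_ka_\ell a_k^{-1}a_\ell^{-1}=(a_ka_\ell)(a_k^{-1}a_\ell^{-1})$ and hence
$$\tilde{\sigma}(a_ka_\ell a_k^{-1}a_\ell^{-1})=\tilde{\sigma}(a_ka_\ell)\,\tilde{\sigma}(a_k^{-1}a_\ell^{-1}).$$
Both factors have already been computed: the first is the matrix $(c_{i,j})$ of \eqref{cij} and the second is the matrix $(c_{i,j}')$ of \eqref{cij'}. Thus the only thing left to do is to carry out the single matrix multiplication $d_{i,j}=\sum_{m}c_{i,m}c_{m,j}'$ and simplify, with no further input about $\tilde{\sigma}$ required.

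First I would record the block structure shared by the two factors: both $(c_{i,j})$ and $(c_{i,j}')$ coincide with the identity matrix outside the two rows indexed by $k$ and $\ell$. For $i\neq k,\ell$ this immediately gives $c_{i,m}=\delta_{i,m}$, whence $d_{i,j}=c_{i,j}'=\delta_{i,j}$, which proves the first assertion and confines the real work to the rows $i=k$ and $i=\ell$. Moreover, in the inner sum $\sum_m c_{i,m}c_{m,j}'$ a column index $m$ contributes something other than the single diagonal term $c_{i,j}$ only when $m\in\{k,\ell\}$, since $c_{m,j}'=\delta_{m,j}$ for every $m\neq k,\ell$. Hence each surviving entry collapses to a sum of at most three products, which I would read off from \eqref{cij} and \eqref{cij'} column by column, treating the cases $j=k$, $j=\ell$, and $j\neq k,\ell$ separately.

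The remaining step is routine algebra, driven by the relations $\alpha(\ell,k)=-\alpha(k,\ell)$ and $\alpha(k,k)=\alpha(\ell,\ell)=1$ recorded after \eqref{cij'} together with the definition of $\alpha$ in \eqref{small coxeter matrix}. For instance, reading off column $k$ gives
$$d_{k,k}=c_{k,k}c_{k,k}'+c_{k,\ell}c_{\ell,k}'=\bigl(1-\alpha(k,\ell)^2\bigr)^2+\alpha(k,\ell)^2=\alpha(k,\ell)^4-\alpha(k,\ell)^2+1,$$
and the same bookkeeping produces $d_{k,\ell}$, $d_{\ell,k}$, and $d_{\ell,\ell}$. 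For $j\neq k,\ell$ the three-term sum $d_{k,j}=c_{k,j}+c_{k,\ell}c_{\ell,j}'+c_{k,k}c_{k,j}'$ simplifies, after cancelling the mixed terms $\pm\alpha(k,\ell)\alpha(\ell,j)$, to exactly $\gamma_j\alpha(k,\ell)^2+\alpha(k,\ell)\alpha(\ell,j)$, which is where the shorthand $\gamma_j$ earns its keep; the companion entry $d_{\ell,j}=c_{\ell,j}+c_{\ell,k}c_{k,j}'+c_{\ell,\ell}c_{\ell,j}'$ collapses to $\gamma_j\alpha(k,\ell)$. I expect the only genuine obstacle to be bookkeeping, namely correctly identifying which entries of the two matrices are nonzero and not dropping the systematic sign change between $(c_{i,j})$ and $(c_{i,j}')$, rather than any conceptual difficulty.
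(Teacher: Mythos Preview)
Your proposal is correct and follows essentially the same approach as the paper: both factor the commutator as $(a_ka_\ell)(a_k^{-1}a_\ell^{-1})$, multiply the precomputed matrices $(c_{i,j})$ and $(c_{i,j}')$, and exploit the fact that these agree with the identity outside rows $k$ and $\ell$ to reduce each entry $d_{i,j}$ to a short sum which is then simplified using $\alpha(\ell,k)=-\alpha(k,\ell)$. The case split $j=k$, $j=\ell$, $j\neq k,\ell$ and the resulting algebra match the paper's computation line for line.
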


\begin{proof}
If $i\neq k, \ell$, then 
$$    d_{i,j} =\sum_{p=1}^{n} c_{i,p}c_{p,j}'   =\sum_{p=1}^{n} \delta_{i,p}c_{p,j}' =c_{i,j}' = \delta_{i,j}.$$ 
If $i=k$, then 
$$
\begin{aligned}
  d_{k,j}  &=\sum_{p=1}^{n} c_{k,p}c_{p,j}' \\
      &=\sum_{\substack{p=1\\p\neq k,\ell}}^{n}c_{k,p}c_{p,j}' + c_{k,k}c_{k,j}' + c_{k,\ell}c_{\ell,j}'\\
      &=\sum_{\substack{p=1\\p\neq k,\ell}}^{n}c_{k,p}\delta_{p,j} + c_{k,k}c_{k,j}' + c_{k,\ell}c_{\ell,j}'\\
      &=\begin{cases}
          c_{k,k}c_{k,k}' + c_{k,\ell}c_{\ell,k}' &\text { if } j=k,\\
          c_{k,k}c_{k,\ell}' + c_{k,\ell} &\text { if } j=\ell, \\
          c_{k,j}+c_{k,k}c_{k,j}'+c_{k,\ell}c_{\ell,j}' &\text { if } j\neq k, \ell,
      \end{cases} \\
      &=\begin{cases}
           \alpha(k,\ell)^4-\alpha(k,\ell)^2+1 & \text{ if } j=k,\\
      \alpha(k,\ell)^3 & \text{ if }  j=\ell,\\
	\gamma_j\alpha(k,\ell)^2+\alpha(k,\ell)\alpha(\ell,j)  & \text{ if }  j\neq k ,\ell.
\end{cases}
\end{aligned}
$$
If $i=\ell$, then 
$$
\begin{aligned}
  d_{\ell,j}  &=\sum_{p=1}^{n} c_{\ell,p}c_{p,j}' \\
      &=\sum_{\substack{p=1\\p\neq k,\ell}}^{n}c_{\ell,p}\delta_{p,j} + c_{\ell,k}c_{k,j}' + c_{\ell,\ell}c_{\ell,j}'\\
      &=\begin{cases}
         c_{\ell,k}c_{k,k}' + c_{\ell,k}'  &\text { if } j=k,\\
	c_{\ell,k}c_{k,\ell}'+ 1  &\text { if } j=\ell, \\
         c_{\ell,j} + c_{\ell,k}c_{k,j}' +c_{\ell,j}' &\text { if } j\neq k, \ell,
      \end{cases} \\
      &=\begin{cases}
        \alpha(k,\ell)^3  &\text { if } j=k,\\
		\alpha(k,\ell)^2 + 1  &\text { if } j=\ell, \\
		\gamma_j\alpha(k,\ell)	 &\text { if } j\neq k, \ell.
      \end{cases}
      \end{aligned}
$$
\end{proof}
If $A$  is right-angled, then
\begin{equation}\label{econ}
\alpha(i,j)=\begin{cases}
~~0\mod 2 &\text{if } i\neq j,\\
1\mod 2&\text{if } i = j,
    \end{cases} \quad , \quad \gamma_j=\begin{cases} 1\mod 4 & \text{if } j=k,\\
    ~~0 \mod 2 &\text{if } j\neq k.
    \end{cases} \quad \textrm{and} \quad d_{i,j} = \delta_{i,j} \mod 4.
\end{equation}

We denote the commutator subgroup of a group $G$ by $G^{'}$.

\begin{theorem}\label{thm:level 4}
Let $\Gamma$ be a right-angled Coxeter graph with no vertices of degree zero. Then  $A[\Gamma][4]= \ker (A[\Gamma] \to W[\Gamma]/W[\Gamma]')$, where $W[\Gamma]$ is the corresponding right-angled Coxeter group.
\end{theorem}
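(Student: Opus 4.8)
The plan is to exploit the commutative diagram displayed just before the statement, in which the level-four reduction $\tilde{\sigma}_4 = \phi\circ\pi$ factors through the projection $\pi\colon A[\Gamma]\to W[\Gamma]$ and a homomorphism $\phi\colon W[\Gamma]\to\GL(n,\mathbb Z_4)$. This factorization is exactly Proposition~\ref{prop:level4raag} with $m=2$, since the normal closure of the $a_i^2$ (which equals $\ker\pi$) is contained in $A[\Gamma][4]$. Because $\tilde{\sigma}_4=\phi\circ\pi$, we have $A[\Gamma][4]=\ker\tilde{\sigma}_4=\pi^{-1}(\ker\phi)$, so the theorem is equivalent to the single identity $\ker\phi=W[\Gamma]'$, which I would prove by two inclusions.

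For $W[\Gamma]'\subseteq\ker\phi$, recall that the commutator subgroup of a group is the normal closure of the commutators of a generating set, so it suffices to check $\phi([s_i,s_j])=I$ for all $i,j$, as $\ker\phi$ is normal. Since $\phi(s_i)=\tilde{\sigma}(a_i)\pmod 4$, we have $\phi([s_i,s_j])=\tilde{\sigma}(a_ia_ja_i^{-1}a_j^{-1})\pmod 4$, and Lemma~\ref{prod matrix square} together with the right-angled reductions collected in \eqref{econ} gives $d_{i,j}=\delta_{i,j}\pmod 4$, i.e.\ this matrix is the identity modulo $4$. Hence every $[s_i,s_j]$ lies in $\ker\phi$, and so does their normal closure $W[\Gamma]'$. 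This inclusion lets me pass to the induced map $\bar{\phi}\colon W[\Gamma]/W[\Gamma]'\to\GL(n,\mathbb Z_4)$; since $W[\Gamma]$ is right-angled, all defining relations except $s_i^2=1$ die in the abelianization, so $W[\Gamma]/W[\Gamma]'\cong(\mathbb Z_2)^n$ with basis the images $\bar{s}_1,\dots,\bar{s}_n$.

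The reverse inclusion $\ker\phi\subseteq W[\Gamma]'$ is the heart of the argument and amounts to showing $\bar{\phi}$ is injective. Here I would use the explicit shape of the matrices. By \eqref{small coxeter matrix} each $\tilde{\sigma}(a_i)$ equals $I+C_i$, where $C_i$ is supported in the $i$-th row and, in the right-angled case, has entries $\pm2$ in the columns $j$ with $m_{i,j}=\infty$; write $C_i\equiv 2B_i\pmod 4$ with $B_i$ having entries $\pm1$ in those columns. For a nonempty subset $T\subseteq\{1,\dots,n\}$, the representative $\prod_{i\in T}s_i$ satisfies
$$\bar{\phi}\Big(\textstyle\sum_{i\in T}\bar{s}_i\Big)=\prod_{i\in T}\tilde{\sigma}(a_i)\equiv I+2\sum_{i\in T}B_i \pmod 4,$$
because every product of two or more of the $2B_i$ is divisible by $4$. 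The crucial point is that the $B_i$ are supported in pairwise distinct rows, so no cancellation occurs between different $i\in T$, and the hypothesis that $\Gamma$ has no vertex of degree zero guarantees $B_i\neq0$ for every $i$. Thus $2\sum_{i\in T}B_i$ is nonzero modulo $4$, giving $\bar{\phi}(\sum_{i\in T}\bar{s}_i)\neq I$. As every nonzero element of $(\mathbb Z_2)^n$ is of this form, $\bar{\phi}$ is injective, i.e.\ $\ker\phi=W[\Gamma]'$.

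Combining the two inclusions with $A[\Gamma][4]=\pi^{-1}(\ker\phi)$ yields $A[\Gamma][4]=\pi^{-1}(W[\Gamma]')=\ker(A[\Gamma]\to W[\Gamma]/W[\Gamma]')$, as claimed. I expect the only genuine obstacle to be the injectivity step: the bookkeeping showing the first-order term $2\sum_{i\in T}B_i$ survives modulo $4$, which is precisely where the no-degree-zero hypothesis enters. Indeed, a degree-zero vertex $i$ would give $\tilde{\sigma}(a_i)=I$, placing $a_i\in A[\Gamma][4]$ while $a_i\notin\ker(A[\Gamma]\to W[\Gamma]/W[\Gamma]')$, so the statement would fail without it.
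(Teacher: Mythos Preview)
Your proof is correct and follows essentially the same route as the paper: factor $\tilde{\sigma}_4$ through $\pi$, then through the abelianization $W[\Gamma]\to(\mathbb Z_2)^n$ using \eqref{econ}, and finally show the resulting map $(\mathbb Z_2)^n\to\GL(n,\mathbb Z_4)$ is injective. The only stylistic difference is in the injectivity step: the paper argues inductively by tracking the row of the largest index $i_k$ in an ordered product $u_{i_1}\cdots u_{i_k}$, whereas you use the cleaner first-order expansion $\prod_{i\in T}(I+2B_i)\equiv I+2\sum_{i\in T}B_i\pmod 4$ together with the observation that the $B_i$ lie in disjoint rows; both arguments extract the same content from the same hypothesis.
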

\begin{proof}
 Let $a_1, \ldots, a_n$ be Artin generators of right-angled Artin group $A[\Gamma]$. Let $q:W[\Gamma]\to \mathbb Z_2^{n}$ the abelianisation map of right-angled Coxeter group $W[\Gamma]$ and $\pi:A[\Gamma]\to W[\Gamma]$ be the natural quotient map.  Let $u_1,u_2,\ldots,u_{n}$ be generators for $\mathbb Z_2^{n}$, where $u_i=q(\pi(a_i))$.  Define a map $\psi:\mathbb Z_2^n \to \GL(n,\mathbb Z_4)$, given by $\psi(u_i)=\tilde{\sigma}(a_i) \mod 4$.  Equation \eqref{econ} implies that the matrix $(d_{i,j})$ is identity modulo 4.  Thus, the map $\psi$ is a group homomorphism. We claim that $\psi$ is injective. Let $w=u_{j_1}u_{j_2}\cdots u_{j_r} \in  \mathbb Z_2^{n}$ be a word of length $r \ge 1$, where $j_1<j_2<\cdots < j_r$. Using \eqref{cij} and induction on $r$, one can see that if $\ell>j_r$, then the $\ell$-th row of $\psi(w)$ has $1$ in the $(\ell,\ell)$-entry and $0$ in all other entries. Now, suppose that  $u=u_{i_1}u_{i_2}\cdots u_{i_k} \in \ker(\psi)$ for some $k\ge1$. Since $u_i \not\in \ker(\psi)$ for all $i$, we have $k>1$ and we can assume without loss of generality that $i_1<i_2<\cdots < i_k$ Note that the $i_k$-th row of $\psi(u_{i_1}u_{i_2}\cdots u_{i_{k-1}})$ has $1$ in the $(i_k,i_k)$-entry and all other entries are 0. Under our assumption on the graph $\Gamma$, we have that  $\psi(u_{i_k})$  is not equal to the identity matrix, i.e., there exists some $j$ such that the $(i_k, j)$-entry of $\psi(u_{i_k})$ is either $2$ or $-2$, $j$-th column of $\psi(u_{i_k})$ has $1$ in the $(j,j)$-entry, and the $(i_k, j)$-entry is $2$ or $-2$, while all other entries are 0. Thus, $\psi(u)$ has a non-zero $(i_k,j)$-entry equal to $2$ or $-2$, which is a contradiction. Hence, $\psi$ is injective, and we obtain $A[\Gamma][4]=\ker(q\circ \pi)$. This completes the proof of the theorem.
\end{proof}

\medskip

\noindent\textbf{Acknowledgments.}
The author would like to sincerely thank Prof. Mahender singh for helpful insights and careful reading of the paper. The author thanks BIMSA for the institute postdoctoral fellowship.
\medskip

\noindent\textbf{Declaration.}
The authors declare that there is no data associated to this paper and that there are no conflicts of interests.
\medskip

\bibliographystyle{plain}
\bibliography{template}

\end{document}